\renewcommand{\epsilon}{\varepsilon}
\renewcommand{\mathfrak}{\mathscr}
\theoremstyle{plain}
\newtheorem{theorem}{Theorem}[section]
\newtheorem{corollary}{Corollary}[section]
\newtheorem{lemma}{Lemma}[section]
\newtheorem{proposition}{Proposition}[section]
\theoremstyle{remark}
\newtheorem{remark}{Remark}[section]
\newtheorem{example}{Example}[section]
\theoremstyle{definition}
\title[Global existence for semilinear damped
wave equation]{Global existence  of solutions for semilinear wave
equations in Friedmann-Lema\^itre-
Robertson-Walker spacetime}
\author{ Marcelo Rempel Ebert  and Jorge Marques }
\address{Marcelo Rempel Ebert, Departamento de Computa\c{c}\~ao e Matem\'atica, Universidade de S\~ao Paulo, Ribeir\~ao Preto, SP, 14040-901, Brazil,  email{ebert@ffclrp.usp.br.br}}
\address{Jorge Marques, CeBER and FEUC, University of Coimbra, Av. Dias da Silva 165,
3004-512 Coimbra, Portugal, email{jmarques@fe.uc.pt}}
\begin{document}

\begin{abstract}

We consider the nonlinear massless wave equation  belonging to some family of the
Friedmann–Lema\^itre–Robertson–Walker (FLRW) spacetime.  We prove  the global in time small data solutions
 for supercritical powers in the case of decelerating expansion universe.

\end{abstract}

\keywords{semilinear wave equations,  critical exponent, global existence, small data solutions}

\subjclass[2020]{35A01, 35B33, 35L05, 35L71. }

\maketitle

\section{Introduction}\label{sec:1}
In this paper, we prove the global existence  (in time)  of small
data solutions to the Cauchy problem for the  semilinear  wave
equation with scale-invariant damping and decreasing in time  propagation speed
\begin{equation}\label{eq:DPE}
\begin{cases}
u_{tt}(t,x)- (1+t)^{-2\ell}\Delta u(t,x) + \frac{\beta}{1+t} u_t (t,x)=f(u(t,x)), & t\geq0, \ x\in \mathbf{R}^n,  \\
u(0,x)=0 =u_0(x), & x\in \mathbf{R}^n,\\
u_t(0,x)=u_1(x), & x\in \mathbf{R}^n,
\end{cases}
\end{equation}
with $\ell\in (0,1)$ and $\beta>0$.
We assume that $f(u)=|u|^p$ for some $p>1$ or, more in general, $f$ verifies the following local Lipschitz-type condition
\begin{equation}\label{eq:f}
|f(u)-f(v)| \leq C\,|u-v|\,\big(|u|^{p-1}+|v|^{p-1}\big).
\end{equation}
The case $\beta=2$ in \eqref{eq:DPE} is well known as FLRW spacetime model for the decelerating expansion universe, whereas in the particular  case $\ell=\frac23$ \eqref{eq:DPE} is the nonsingular  covariant  massless field in the Einstein- de Sitter
spacetime (see \cite{GY20}).


Let us start with the  state of the art in the case  $\ell=0$. If $\beta\geq \frac53$ for $n=1$,  $\beta\geq 3$ for
$n=2$, or $\beta\geq n+2$ for $n\geq 3$, by assuming data in the
energy spaces with additional regularity $L^1(R^n)$,  the global (in
time) existence result for (\ref{eq:DPE}) was proved in
\cite{DAmmas} for $p> p_{F}(n)\doteq 1 + \frac2n$, the well known
Fujita index \cite{F66}. The  exponent $p_{F}(n)$  is critical for this
model, that is, for $p\leq p_{F}(n)$ and suitable, arbitrarily small
data, there exists no global weak solution \cite{DAL13}. As
conjectured  in \cite{DAL} and \cite{DALR}, if $\beta$ becomes
smaller with respect to the space dimension  $n$, the critical exponent
increase to $\max\{p_{S}(n+\beta), p_{F}(n)\}$,
 where $p_S$ is the Strauss exponent for the semilinear undamped wave equation \cite{GLS}, \cite{S}.
 In \cite{IS} the authors  proved a blow-up result and gave the upper
bound for the lifespan of solutions to (\ref{eq:DPE}) for  $1<p\leq
p_{S}(n+\beta)$ and  $\beta \in [0, \beta_{\star})$, with  $\beta_{\star}=\frac{n^2+n+2}{n+2}$. It is worth
noticing that if  $\beta \in [0, \beta_{\star})$, then $p_{F}(n)<
p_{S}(n+\beta)$ and,  $p_{F}(n)= p_{S}(n+\beta_{\star})$.\\
Recently D'Abbicco (see \cite{DA21} and \cite{DA20}) proved his conjecture in which the critical exponent is equals to $\max\{p_{S}(n+\beta), p_{F}(n)\}$  for $n=1$ and, also proved the global existence of small data solutions for $p> p_{F}(n)$ and $\beta\geq n$ in space dimension $2\leq n\leq 5$.
As far as we know, it is still a open problem to prove global
existence of small data solutions  for $p> p_{F}(n)$ in the cases
  $\beta_{\star}<\beta< n$ for $n\geq 3$ and for $p> p_{S}(n+\beta)$ for $0<\beta<\beta_{\star}$ for  $n\geq 2$.


 For $\ell\in [0,1)$, $\beta \geq 0$ and $n\geq 2$, let $p_{S}(n, \ell, \beta)$ be the positive root of the quadratic equation
\[ \left( n-1+ \frac{\beta-\ell}{1-\ell}\right)p^2- \left( n+1+ \frac{\beta+3\ell}{1-\ell}\right)p-2=0.\]
Recently, in \cite{TW20} and \cite{TW21} the authors have  proved blow-up in a finite time and upper estimates of the lifespan
for solutions to \eqref{eq:DPE} for
\[ 1< p\leq \max\{ p_{F}(n(1-\ell)), p_{S}(n, \ell, \beta) \}.\]
A blow-up result for $\beta=2$ and $\ell\in (0,1)$ in \eqref{eq:DPE} was also proved in \cite{GY20}.\\
It is worth
noticing that if   $p_{F}(n(1-\ell))= p_{S}(n, \ell, \beta_c(n, \ell))$, where
\[
 \beta_c(n, \ell)\doteq\ell + (1-\ell)\left(n+ 1 - \frac2{p_c}\right)= \frac{n^2(1-\ell)^2 + n(1-\ell)(1+2\ell) +2}{2+ n(1-\ell)}. \]
 In particular, if $\beta\geq \beta_c(n, \ell)$, then $
p_{S}(n, \ell, \beta)\leq p_{F}(n(1-\ell)) $.

In \cite{Bui_Reissig}, the authors proposed a classification of
non-effective and effective dissipation, respectively,  for  the
damped wave equation
\[
u_{tt}(t,x) - a^2(t) \Delta u(t,x) + b(t) u_t(t,x)=0 \,
\]
with increasing speed of propagation. The authors derived sharp
estimates for solutions to the Cauchy problem  and, in the  case of
effective dissipation, i.e.,
\[ b(t) \frac{A(t)}{a(t)}\rightarrow \infty, \ as \ t\rightarrow\infty, \qquad A(t)= 1 + \int_0^t a(\tau)\,d\tau,\]
derived global existence (in time) results for the semilinear
problem with power nonlinearities. A similar classification was
introduced in \cite{Ebert_Reissig} in the case $a\in L^1$.
  A natural generalization for the model (\ref{eq:DPE})
is to consider  a positive and decreasing speed of propagation
$a(t)$, with $a\notin L^1$. But in this paper we restrict ourselves
to  the case in which $a$ is a irrational function, since it includes
interesting models by itself, for instance, if $\ell=\frac23$  in
(\ref{eq:DPE}), the considered model  coincides with the
non-singular wave equation in the Einstein de Sitter space-time
(\cite{GKY}, \cite{GYNA2015}).

The main goal in this paper is to prove, under the assumption of small initial data in $L^1(\mathbf{R}^n)\times H^{k-1}(\mathbf{R}^n)$, $k\geq 1$,  the global existence (in time) of solutions to \eqref{eq:DPE} for supercritical powers $p>p_{F}(n(1-\ell)) $,  by supposing that $\beta\geq \beta_c(n, \ell)$. Combine the obtained results in this paper with
the blow-up results derived in  \cite{TW21} we conclude that $p_F(n,\ell)=1+\frac{2}{n(1-\ell)}$  is the critical exponent  for the global in time
existence of solutions for $\beta\geq  \beta_c(n, \ell)$.\\
As far as we know, it is still a open problem to prove global
existence of small data solutions  to \eqref{eq:DPE} for $ p> p_{S}(n, \ell, \beta)$ and
 $0<\beta\leq \beta_c(n, \ell)$.  It is expected that a similar approach to those used for the semilinear free wave equation may be appropriate
to decrease values of $\beta$ and  to overcame some gaps that appear in this paper.

\section{Main results}
\label{sec:2}
 To simplify the writing, from now we consider
\begin{equation}\label{pfujita}
p_c(n, \ell)\doteq p_{F}(n(1-\ell))= 1 +\frac{2}{n(1-\ell)}.
\end{equation}
In the next two theorems, due to the fact that $p_c(n, \ell) \rightarrow \infty$ as $\ell\rightarrow 1$, the choice of the  spaces of solutions  is  related to  fixed  ranges for
$\ell\in [0,1)$ and the   space dimensions $n \geq 2$.   To state our first result, let us define the following parameters
\begin{equation}\label{qbar}
\bar q \doteq  \frac{2(np_c(n, \ell)-1)}{n+1}, \qquad  q_\sharp\doteq \frac{2(n+1)}{n-1}.
\end{equation}

\begin{theorem}\label{highdimensionssharp}
Let $\ell$ be such that
$$
\left\{
\begin{array}{ll}
0 \leq \ell < 1 -\frac{n-1}{2n}, & \mbox{if  $2\leq n \leq 5$} \\
 1- \frac{2(n+1)}{n(n-3)}\leq \ell  < 1 -\frac{n-1}{2n}, & \mbox{if $ 6\leq n \leq 8$}
\end{array}
\right.
$$
and
\[ \beta \geq  \ell + (n+1)(1-\ell)-  \frac{2}{\bar q}  (1-\ell),\]
  with $\bar q\in [p_c(n, \ell),  q_\sharp]$, where $p_c(n, \ell)$, $q_\sharp$ and $\bar q$ are given by \eqref{pfujita} and \eqref{qbar}.
If
\[p_c(n, \ell)<p \leq \frac{4p_c(n,\ell)}{n+3} + 1, \]
then there exists
$\delta>0$ such that for any initial data
\[  u_1  \in \mathcal{D}= L^1(\mathbf{R}^n)\cap L^2(\mathbf{R}^n), \qquad  ||u_1||_\mathcal{D}\leq \delta,\]
there exists a unique weak solution $u\in C([0, \infty),
L^{p_c}(\mathbf{R}^n)\cap L^{q_\sharp}(\mathbf{R}^n))$ to
(\ref{eq:DPE}). Moreover,  the solution satisfies the following
estimates for $p_c\leq q \leq q_\sharp$:\\
If $  \beta > \ell +( n+1)(1-\ell)-\frac{2}{q}(1-\ell) $ then \footnote{Let $f,g : \Omega \subset \mathbf{R}^n \to
\mathbf{R}$ be two functions. From now one we use the notation
$f\lesssim g$ if there exists a constant $C>0$ such that $ f(y)
\leq C g(y)$ for all $y\in \Omega$. }
\begin{equation}\label{ineqnonlinear}
||  u(t, \cdot) ||_{L^q} \lesssim
(1+t)^{-n\left(1-\frac{1}{q} \right)(1-\ell)}|| u_1||_\mathcal{D}
 \end{equation}
whereas  if $ \ell +(n+1)(1-\ell)-\frac{2}{\bar q}(1-\ell)
\leq \beta \leq \ell +(n+1)(1-\ell)-\frac{2}{q}(1-\ell) $,   then
  for any $\epsilon>0$
\begin{equation}\label{ineqnonlinear2}
||  u(t, \cdot) ||_{L^{q}} \lesssim
(1+t)^{[\epsilon -(n-1)\left(\frac12-\frac1{q}\right)](1-\ell)-\frac{ \beta-\ell}{2(1-\ell)}} || u_1||_\mathcal{D}.
\end{equation}
 \end{theorem}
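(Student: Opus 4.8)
The plan is to solve \eqref{eq:DPE} by Banach's fixed point theorem applied to Duhamel's operator. Denote by $L(t,s)\,g$ the value at time $t$ of the solution of the \emph{linear} equation obtained from \eqref{eq:DPE} by dropping $f$, with Cauchy data $(0,g)$ prescribed at time $s$; then $L(t,0)u_1$ is the free solution and a weak solution of \eqref{eq:DPE} is a fixed point of
\[
N[u](t,\cdot)\;=\;L(t,0)u_1\;+\;\int_0^t L(t,s)\,f\!\big(u(s,\cdot)\big)\,ds .
\]
I would look for this fixed point in
\[
X=\Big\{\,u\in C\big([0,\infty);L^{p_c}(\mathbf{R}^n)\cap L^{q_\sharp}(\mathbf{R}^n)\big)\,\Big\},\qquad
\|u\|_X=\sup_{t\ge0}\Big(M_{p_c}(t)\,\|u(t,\cdot)\|_{L^{p_c}}+M_{q_\sharp}(t)\,\|u(t,\cdot)\|_{L^{q_\sharp}}\Big),
\]
where $M_q(t)=(1+t)^{\gamma(q)}$ and $\gamma(q)$ is the exponent appearing on the right-hand side of \eqref{ineqnonlinear} when $\beta>\beta(q):=\ell+(n+1)(1-\ell)-\frac2q(1-\ell)$, and of \eqref{ineqnonlinear2} when $\beta(\bar q)\le\beta\le\beta(q)$. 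Since $\gamma$ depends continuously on $q$ and, for $p_c\le q\le q_\sharp$, the norm $\|\cdot\|_{L^q}$ is controlled by interpolation between $\|\cdot\|_{L^{p_c}}$ and $\|\cdot\|_{L^{q_\sharp}}$, it suffices to run the whole argument at the two endpoints $q\in\{p_c,q_\sharp\}$; the full family \eqref{ineqnonlinear}--\eqref{ineqnonlinear2} then follows from the $X$-bound for the fixed point.

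First I would assemble the linear ingredients. After the change of time variable $\eta=(1-\ell)^{-1}(1+t)^{1-\ell}=1+\int_0^t(1+\tau)^{-\ell}\,d\tau$ the principal part of \eqref{eq:DPE} becomes the standard d'Alembertian and the equation turns into a scale-invariant damped wave equation with a fixed positive power of $\eta$ in front of the source; the exponent $\beta_c(n,\ell)$ is precisely the threshold of the dissipation parameter that separates the regime in which the homogeneous solution enjoys the full diffusive decay \eqref{ineqnonlinear} from the (still globally solvable) regime in which it has only the weaker, wave-type decay \eqref{ineqnonlinear2}. From a WKB/representation analysis of the corresponding linear problem one obtains, for suitable exponents $m\le q$, estimates of the form
\[
\|L(t,s)\,g\|_{L^q}\;\lesssim\;\Lambda_{m,q}(t,s)\,\|g\|_{L^m},
\]
where $\Lambda_{m,q}(t,0)$ reproduces the weight $M_q(t)^{-1}$ of \eqref{ineqnonlinear}--\eqref{ineqnonlinear2} for $m\in\{1,2\}$, while for $0<s<t$ the factor $\Lambda_{m,q}(t,s)$ is of the form $(1+t)^{-\gamma_m(q)}$ times a quantity that is integrable in $s$ near $s=0$ and a negative power of $1+s$ elsewhere. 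Taking $g=u_1$ with $m\in\{1,2\}$ gives at once $\|L(\cdot,0)u_1\|_X\lesssim\|u_1\|_{\mathcal D}\le\delta$.

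The heart of the proof is the nonlinear estimate $\|N[u]-L(\cdot,0)u_1\|_X\lesssim\|u\|_X^{\,p}$. Fixing $q\in\{p_c,q_\sharp\}$, I would split $\int_0^t=\int_0^{t/2}+\int_{t/2}^t$ and, on each piece, apply the linear estimate above to $f(u(s,\cdot))$ with an auxiliary exponent $m=m(q,s)$ chosen so that $p\,m$ lies in $[p_c,q_\sharp]$ (a small $m$ on $[0,t/2]$, an $m$ close to $q$ on $[t/2,t]$); then bound $\|f(u(s,\cdot))\|_{L^m}=\|u(s,\cdot)\|_{L^{pm}}^{p}\le(1+s)^{-p\gamma(pm)}\|u\|_X^{\,p}$ and carry out the $s$-integration. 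Convergence near $s=0$ is automatic, and the power of $1+t$ that is produced must not exceed $-\gamma(q)$; it is exactly here that supercriticality $p>p_c(n,\ell)$ enters, through an inequality of the type $p\,\gamma(pm)>\gamma(q)+1$ for the relevant $m$. The admissibility of all these auxiliary exponents --- keeping $pm$ inside $[p_c,q_\sharp]$ while the $s$-integral still produces decay at least $-\gamma(q)$ --- is what forces the upper bound $p\le\frac{4p_c(n,\ell)}{n+3}+1$; non-emptiness of the resulting interval $\big(p_c(n,\ell),\frac{4p_c(n,\ell)}{n+3}+1\big]$ is equivalent to $p_c(n,\ell)<\frac{n+3}{n-1}$, i.e.\ to $\ell<1-\frac{n-1}{2n}$, while the additional lower bound on $\ell$ for $6\le n\le 8$ reflects the further constraints (existence of some $\bar q\in[p_c,q_\sharp]$ with $\beta\ge\beta(\bar q)$, positivity of all the decay weights) needed to close the scheme in that dimension range. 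Replacing $|u|^p$ by $f(u)-f(v)$ and using \eqref{eq:f} in the very same way yields $\|N[u]-N[v]\|_X\lesssim\big(\|u\|_X^{p-1}+\|v\|_X^{p-1}\big)\|u-v\|_X$; hence, for $\delta$ small, $N$ is a contraction on the ball $\{\|u\|_X\le 2C\delta\}$, its unique fixed point is the desired weak solution, membership in $C([0,\infty);L^{p_c}\cap L^{q_\sharp})$ follows from the integral equation, and the bound $\|u\|_X\le 2C\delta$ is precisely \eqref{ineqnonlinear}--\eqref{ineqnonlinear2}.

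I expect the real obstacle to be the bookkeeping in this last step: one must keep $pm$ inside $[p_c,q_\sharp]$, produce an \emph{integrable} singularity at $s=t$, and recover the sharp power of $1+t$, all simultaneously. This is most delicate at the endpoint $q=q_\sharp$ when $\beta$ lies in the lower range $\beta(\bar q)\le\beta\le\beta(q_\sharp)$, where \eqref{ineqnonlinear2} loses an $\epsilon$ and one has to make sure that these $\epsilon$'s do not accumulate along the iteration. This tension is exactly what obstructs lowering $\beta$ below $\beta_c(n,\ell)$ by the present method, and is the source of the gaps mentioned in the introduction.
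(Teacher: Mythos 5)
Your plan is essentially the paper's proof: the same change of variables $1+\tau=\frac{(1+t)^{1-\ell}}{1-\ell}$ reducing \eqref{eq:DPE} to a scale-invariant damped wave equation with source $[(1-\ell)(1+\tau)]^{\frac{2\ell}{1-\ell}}|v|^p$, the same solution space $C([0,\infty),L^{p_c}\cap L^{q_\sharp})$ with time-weighted norms matching \eqref{ineqnonlinear}--\eqref{ineqnonlinear2} at the two endpoint exponents, Duhamel plus Banach fixed point, and interpolation to recover all $q\in[p_c,q_\sharp]$.

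The points where your outline diverges from, or blurs, what actually makes the argument close are the following. First, the linear input is not a generic family of $L^m$--$L^q$ estimates obtained by WKB analysis: it is precisely Corollary 2 of D'Abbicco \cite{DA20}, an $(L^1\cap L^{r(q)})$--$L^q$ estimate with the companion exponent $r(q)$ fixed by $d(r(q),q)=1$, i.e.\ $\frac{n}{r(q)}=\frac12+\frac n2+\frac1q$, and valid only for $\frac{2(n-1)}{n+1}\le q\le q_\sharp$. This validity range, applied at $q=p_c$, is the actual source of the lower bound $\ell\ge 1-\frac{2(n+1)}{n(n-3)}$ for $6\le n\le 8$, not the existence of $\bar q$ or positivity of weights as you suggest. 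Second, no splitting of the Duhamel integral at $t/2$ is used or needed: in these estimates the decay factor in $\tau$ comes out of the integral uniformly in $s$, and the $s$-integral converges at infinity exactly when $p>p_c(n,\ell)$; moreover your proposed choice of ``$m$ close to $q$ on $[t/2,t]$'' is not backed by the available multiplier estimates, which only furnish the pair $(L^1,L^{r(q)})$ with $r(q)<2$. Third, the upper bound $p\le\frac{4p_c}{n+3}+1$ arises concretely from requiring $r(q_\sharp)p\le q_\sharp$ (so that $\||v|^p\|_{L^{r(q)}}=\|v\|^p_{L^{pr(q)}}$ is controlled by the $X$-norm via interpolation between $L^{p_c}$ and $L^{q_\sharp}$) together with the sign condition $\gamma=\frac{p-1}{p_cr(q_\sharp)}+\frac1{q_\sharp}-\frac1{r(q_\sharp)}\le0$ in the intermediate range of $\mu$; this in turn forces $r(q_\sharp)p_c<q_\sharp$, i.e.\ $\ell<1-\frac{n-1}{2n}$, which is Remark \ref{crucial}. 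Finally, the $\epsilon$'s cannot accumulate along the iteration because they are fixed once and for all in the weights defining $\|\cdot\|_{X(T)}$; the only care needed is to take $\epsilon$ small enough that the $s$-integrals still converge. Since all of these are exactly the computations you defer as ``bookkeeping,'' your proposal is a correct skeleton of the paper's argument but does not by itself establish the stated ranges of $p$, $\ell$ and $\beta$.
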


\begin{remark}\label{crucial}
One of the crucial property in the proof of Theorem \ref{highdimensionssharp} is that $r(q)p_c(n, \ell) < q_\sharp$,
for all $p_c(n, \ell)\leq q\leq q_\sharp$, with $\frac1{r(q)}\doteq \frac1{2n}+ \frac{1}{2}+ \frac{1}{nq}$. This condition is satisfied under some condition on $\ell$, namely,
\[r(q_\sharp)p_c< q_\sharp  \Leftrightarrow  \ell < 1- \frac{4}{q_\sharp(n+1)-2(n-1)}=1 -\frac{n-1}{2n}.\]
Since $r(q)\leq r(q_\sharp)$ for all $p_c\leq q\leq q_\sharp$, we also have
\[ r(q)p_c(n, \ell) < q_\sharp \Leftrightarrow \ell < 1 -\frac{n-1}{2n}.\]
In particular, it implies the existence of $\bar q$ satisfying $\bar q< q_\sharp $.\\
For instance, for $\ell\in \left[0, \frac34\right)$ if $n=2$,  and for $\ell\in \left[0, \frac23\right)$ if $n=3$.\\
Moreover,
\[
r(q_\sharp)p_c(n,\ell)< q_\sharp \Longleftrightarrow p_c(n,\ell)  \left ( 1-\frac{r(q_\sharp)}{q_\sharp}  \right ) +1=\frac{4p_c(n,\ell)}{n+3}   +1 < \frac{q_\sharp}{r(q_\sharp)}=\frac{n+3}{n-1}.
\]
\end{remark}
\begin{remark}
Taking into account that $L^1-L^q$ linear estimates in Corollary 2 of \cite{{DA20}} hold only for $\frac{2(n-1)}{n+1}\leq q \leq q_\sharp$,  in the proof of Theorem \ref{highdimensionssharp} we have to assume
\[p_c(n, \ell)\geq \frac{2(n-1)}{n+1}.
\]
Hence a restriction from below in $\ell$ is also  needed, namely, $\ell \geq 1- \frac{2(n+1)}{n(n-3)}$, $n \geq 4$.
This condition is true for  $\ell = 0$ under the assumption $ 2\leq n \leq 5 $ in Theorem \ref{highdimensionssharp}.
\end{remark}
\begin{remark}\label{aboutqbar}
 Let $\frac{n}{r(q)}= \frac12+ \frac{n}{2}+ \frac{1}{q}$.  Condition \eqref{qbar} means that $\bar q$ is defined by $p_c(n, \ell)r(\bar q)= \bar q$.
 In particular, thanks to $(n-1)p_c(n, \ell) \geq 1$ for $n\geq 2$ it holds
 \begin{eqnarray*}
  \frac1{\bar q}-\frac{n}{p_cr(q_{\sharp})}+\frac{n-1}{q_{\sharp}}&=&\frac{n}{p_c r(\bar q)}-\frac{n-1}{\bar q}-\frac{n}{p_cr(q_{\sharp})}+\frac{n-1}{q_{\sharp}} \\
  &=& \frac1{p_c}\left(\frac{n}{r(\bar q)}-\frac{n}{r(q_{\sharp})}\right) - (n-1)\left(\frac{1}{\bar q}-\frac{1}{q_{\sharp}}\right)\\
  &=&\left(\frac1{p_c} - n+1\right)\left(\frac{1}{\bar q}-\frac{1}{q_{\sharp}}\right)\leq 0.
  \end{eqnarray*}
 In the case   $\bar q=p_c$,  Theorem \ref{highdimensionssharp} yields the threshold value
 \[\beta\geq \ell + (1-\ell)\left(n+ 1 - \frac2{p_c}\right)= \beta_c(n, \ell).\]

If $\ell=0$ then $p_c(n,0)=1+
\frac{2}{n}$ and  $\bar q=2$, so we have to assume $\beta\geq n+1 -\frac{2}{\bar q}=n$.  In particular for $\ell=0$ and $n=2$, this condition coincides with the threshold value
$\beta \geq \beta_c(2, 0)=2$.
\end{remark}

%
 In the next result, the novelty is to use higher  regularity $H^k(\mathbf{R}^n), k> \frac{n}{2},$ in order to consider larger values  on the parameter $\ell$ and to relax  the condition in the upper bound for $p$   in Theorem \ref{highdimensionssharp}.
In this way one can also consider values of $\ell \in \left[\frac{n+1}{2n}, 1\right)$ for $n=3,4$, in particular  include
the decreasing speed of propagation $a(t)= (1+t)^{-\frac23}$,  that appears in the well known Einstein de Sitter model for decelerating expanding universe  \cite{GKY}.
\begin{theorem}\label{highdimensions}
Let   $\ell\in \left(1-\frac2{n}, 1\right)$ for  $2\leq n\leq 4$ and $\ell\in \left[\frac12\left( 1+ \sqrt{1-16/n^2}\right), 1\right)$ for $n\geq 5$.
If $\beta\geq \ell + n(1-\ell)(1+\ell)$ and
$p>p_c(n, \ell)$,
 with $p_c(n, \ell)$ given by \eqref{pfujita},
 then there exists
$\delta>0$ such that for any initial data
\[  u_1\in \mathcal{D}= H^{k-1}(\mathbf{R}^n) \,  \cap \,    L^1(\mathbf{R}^n), \qquad  ||u_1||_\mathcal{D}\leq \delta,\]
 there exists a unique energy solution $u\in C([0, \infty),
 H^k(\mathbf{R}^n))$ to
(\ref{eq:DPE}) with $1+\frac{n\ell}{2}\doteq k\leq p_c$, which satisfies the following
estimates
\begin{equation}
||  u (t, \cdot) ||_{L^2} \lesssim
(1+t)^{\frac{n}{2}(\ell-1) } || u_1||_\mathcal{D};
\end{equation}
and
\begin{equation}\label{ineqnonlineari}
||  u (t, \cdot) ||_{\dot H^{k}}
 \lesssim || u_1||_\mathcal{D}
 \begin{cases}
  (1+t)^{(\ell-1)\left(\frac{n}{2} + k\right)}, & \beta> \ell+ n(1-\ell) + 2k(1-\ell)
\\
(1+t)^{\frac{\ell-\beta}{2} }(\ln(e+t))^{\frac{1}2}, & \beta= \ell+ n(1-\ell) + 2k(1-\ell)
\\
(1+t)^{\frac{\ell-\beta}{2} }, & \beta< \ell+ n(1-\ell) + 2k(1-\ell).
\end{cases}
\end{equation}
%
\end{theorem}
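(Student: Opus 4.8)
\medskip
\noindent\textbf{Proof proposal.}

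The plan is to solve \eqref{eq:DPE} by a contraction argument in a complete metric space whose norm encodes exactly the decay rates claimed in \eqref{ineqnonlineari}. Let $u^{\mathrm{lin}}$ be the solution of the homogeneous equation associated with \eqref{eq:DPE} with data $(0,u_1)$, and for $0\le s\le t$ let $E(t,s)g$ denote the value at time $t$ of the solution of the homogeneous equation with Cauchy data $(0,g)$ prescribed at time $s$. By Duhamel's principle, \eqref{eq:DPE} is equivalent to the fixed-point problem $u=Nu$ with
\[
(Nu)(t):=u^{\mathrm{lin}}(t)+\int_0^t E(t,s)\,f(u(s))\,ds .
\]
Since $a(t)=(1+t)^{-\ell}\notin L^1$ (indeed $A(t):=1+\int_0^t a(\tau)\,d\tau\simeq (1+t)^{1-\ell}$), the parabolic reduction of \eqref{eq:DPE} has effective time $\sigma(t)\simeq (1+t)^{2(1-\ell)}$, and the scale-invariant damped wave equation with variable propagation speed admits $L^1\cap H^{k-1}\to L^2\cap\dot H^{k}$ estimates of diffusion-phenomenon type, established in \cite{DA20}: for $u_1\in\mathcal D=L^1\cap H^{k-1}$,
\[
\|u^{\mathrm{lin}}(t)\|_{L^2}\lesssim (1+t)^{\frac n2(\ell-1)}\|u_1\|_{\mathcal D},
\qquad
\|u^{\mathrm{lin}}(t)\|_{\dot H^{k}}\lesssim w_\beta(t)\,\|u_1\|_{\mathcal D},
\]
where $w_\beta(t)$ is the weight on the right-hand side of \eqref{ineqnonlineari}, that is, the larger of the diffusive rate $(1+t)^{(\ell-1)(\frac n2+k)}$ and the wave-type rate $(1+t)^{\frac{\ell-\beta}{2}}$, with an extra $(\ln(e+t))^{1/2}$ factor at the resonance $\beta=\ell+n(1-\ell)+2k(1-\ell)$; and $E(t,s)g$ obeys the same estimates with $\|u_1\|_{\mathcal D}$ replaced by $\|g\|_{L^1\cap H^{k-1}}$ and each factor $(1+t)$ replaced by a kernel $K(t,s)$ carrying the intermediate time (the factor $1+s$ coming from $1/b(s)$ in the reduction and the shift $\sigma(t)\rightsquigarrow\sigma(t)-\sigma(s)$ in the diffusive part). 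The hypothesis $\beta\ge\ell+n(1-\ell)(1+\ell)$ in particular places us above the $L^2$-resonance $\beta=\ell+n(1-\ell)$, so the $L^2$-rate above is the genuine diffusive one.

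Next I would fix
\[
X:=\Big\{u\in C\big([0,\infty),H^{k}\big):\ \|u\|_X:=\sup_{t\ge0}\Big[(1+t)^{\frac n2(1-\ell)}\|u(t)\|_{L^2}+w_\beta(t)^{-1}\|u(t)\|_{\dot H^{k}}\Big]<\infty\Big\},
\]
and work on the closed ball $B_R=\{\,u\in X:\|u\|_X\le R\,\}$. The hypotheses on $\ell$ force $k=1+\tfrac{n\ell}{2}>\tfrac n2$, hence $H^{k}\hookrightarrow L^{q}$ for every $q\in[2,\infty]$; interpolating $L^2$ and $\dot H^{k}$ via Gagliardo–Nirenberg one gets, for $u\in B_R$, a bound $\|u(t)\|_{L^{q}}\lesssim (1+t)^{-\rho_q(t)}R$ with an explicit, possibly logarithmically corrected, rate $\rho_q$, which in the diffusive range $\beta>\ell+n(1-\ell)+2k(1-\ell)$ simplifies to the self-similar value $\rho_q=n(1-\ell)\big(1-\tfrac1q\big)$. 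The first task is $N(B_R)\subset B_R$: one inserts
\[
\|f(u(s))\|_{L^1}=\|u(s)\|_{L^p}^{p}\lesssim (1+s)^{-p\rho_p(s)}R^{p},
\]
together with the analogous bounds for $\|f(u(s))\|_{L^2}$ and for $\|f(u(s))\|_{\dot H^{k-1}}\lesssim \|u(s)\|_{L^\infty}^{p-1}\|u(s)\|_{\dot H^{k-1}}$ (the fractional chain rule being licit because $p>p_c\ge k>k-1$), into the linear estimates for $E(t,s)$ and integrates in $s\in[0,t]$. Each resulting integral $\int_0^t K(t,s)\,(1+s)\,(1+s)^{-p\rho_p(s)}\,ds$ (and its $\dot H^{k-1}$ analogue) has to be shown $\lesssim$ the corresponding weight of $\|\cdot\|_X$: integrability of the tail $s\to\infty$ amounts exactly to $p\rho_p>2$, i.e. $n(1-\ell)(p-1)>2$, i.e. $p>p_c(n,\ell)$, while the contribution near $s\sim t$ in the $\dot H^{k}$ component reproduces the trichotomy of \eqref{ineqnonlineari} (a power decay for large $\beta$, a $(\ln(e+t))^{1/2}$ loss at resonance, the slower rate $(1+t)^{(\ell-\beta)/2}$ below it). This gives $\|Nu\|_X\lesssim \|u_1\|_{\mathcal D}+R^{p}$, hence $N(B_R)\subset B_R$ once $R$ and $\delta=\|u_1\|_{\mathcal D}$ are small.

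The contraction estimate follows along the same lines. By \eqref{eq:f},
\[
\big\|f(u(s))-f(v(s))\big\|_{Y}\lesssim \big\|\,|u(s)-v(s)|\,\big(|u(s)|^{p-1}+|v(s)|^{p-1}\big)\big\|_{Y}
\]
for $Y\in\{L^1,L^2,\dot H^{k-1}\}$, and Hölder's inequality, the embedding $H^k\hookrightarrow L^\infty$ and Gagliardo–Nirenberg reduce this to a product of $\|u-v\|_X$ by $\|u\|_X^{p-1}+\|v\|_X^{p-1}$ times the same decaying factors as before; integrating in $s$ yields $\|Nu-Nv\|_X\le \tfrac12\|u-v\|_X$ on $B_R$, shrinking $R$ if necessary. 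Banach's fixed-point theorem then produces a unique $u\in B_R$ with $u=Nu$; this is the energy solution in $C([0,\infty),H^k)$, it obeys \eqref{ineqnonlineari} by construction, and its uniqueness in the full class $C([0,\infty),H^k)$ with those decay rates follows from a standard continuation argument using the same estimates. The step I expect to be the main obstacle is closing the $\dot H^{k}$-component of the Duhamel term in the non-effective window $\ell+n(1-\ell)(1+\ell)\le\beta\le\ell+n(1-\ell)+2k(1-\ell)$, where no diffusive gain is available in $\dot H^{k}$ and one must instead rely on the sharp $L^1\cap H^{k-1}\to\dot H^{k}$ estimate of \cite{DA20} together with the bound $k\le p_c$; it is precisely this balance that forces the lower restrictions on $\ell$ (hence the split $2\le n\le4$ versus $n\ge 5$ with $\ell\ge\tfrac12\big(1+\sqrt{1-16/n^2}\big)$ in the statement).
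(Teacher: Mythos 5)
Your overall architecture coincides with the paper's: Duhamel's formula, a weighted norm on $C([0,\infty),H^k)$ encoding the trichotomy of \eqref{ineqnonlineari}, Gagliardo--Nirenberg interpolation between $L^2$ and $\dot H^k$, the embedding $H^k\hookrightarrow L^\infty$ for $k>\frac n2$, the fractional chain rule for $\||u|^p\|_{\dot H^{k-1}}$ (whence the need for $p>k$, i.e.\ $k\le p_c$), and the observation that integrability of the tail of the Duhamel integral is exactly $p>p_c(n,\ell)$. However, there are two genuine gaps. First, you source the linear $(L^1\cap H^{k-1})\to(L^2\cap\dot H^k)$ decay estimates, including the parameter-dependent version for $E(t,s)$, to \cite{DA20}. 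They are not there: \cite{DA20} concerns the Euler--Poisson--Darboux equation obtained after a Liouville-type change of variables and provides $(L^1\cap L^2)\to L^q$ estimates; the paper uses that result only for Theorem \ref{highdimensionssharp}. For Theorem \ref{highdimensions} the required estimates are the content of Proposition \ref{Lq}, which the paper must and does prove from scratch (Sections \ref{subsec:3}--\ref{subsec:4}): explicit Fourier multipliers written in terms of Hankel and Bessel functions, a three-zone decomposition of the extended phase space $Z_1,Z_2,Z_3$ determined by $(1+t)^{1-\ell}|\xi|$ and $(1+s)^{1-\ell}|\xi|$, and separate $L^r$ bounds of the multiplier in each zone. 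In particular the precise powers of $(1+s)$ multiplying $\|g_2\|_{L^1}$, $\|g_2\|_{L^m}$ and $\|g_2\|_{\dot H^{k-1}}$, and the logarithm at the resonance, come out of the $Z_2$ analysis; without this input the fixed-point scheme cannot be quantified.

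Second, you explicitly leave open ``the main obstacle,'' namely closing the $\dot H^k$ component of the Duhamel term when $\ell+n(1-\ell)(1+\ell)\le\beta<\ell+n(1-\ell)+2k(1-\ell)$, which is precisely the regime that dictates the lower bound on $\beta$ in the hypothesis. The paper closes it by exploiting that in this range the norm carries the wave-type weight $(1+t)^{\frac{\beta-\ell}{2}}$ on $\|u(t)\|_{\dot H^k}$ rather than the diffusive one, then estimating $\||u(s)|^p\|_{L^1}$ and $\||u(s)|^p\|_{L^m}$ via the homogeneous Sobolev embedding $\|u\|_{L^q}\lesssim\|u\|_{\dot H^{k(q)}}$ with $k(q)=n(\frac12-\frac1q)$ and a case distinction according to whether $k(q)$ lies below or above $\bar k=\frac{\beta-\ell}{2(1-\ell)}-\frac n2$; the worst case produces the condition $\beta>\ell+n(1-\ell)+\frac{2n(1-\ell)\ell}{2+n(1-\ell)}$, which is implied by $\beta\ge\ell+n(1-\ell)(1+\ell)$, and the $\dot H^{k-1}$ term needs $\beta\ge\ell+n(1-\ell)+2(k-1)(1-\ell)$, which is checked against $k=1+\frac{n\ell}{2}$. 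None of this bookkeeping is in your proposal, and it is where the stated hypotheses on $\beta$ and $\ell$ are actually consumed; attributing the restrictions on $\ell$ solely to ``the balance with $k\le p_c$'' is therefore not a proof of the intermediate case.
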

%
\begin{remark}
For $ n\geq 2$ we point out that
$$
\ell>1-\frac2n \Longleftrightarrow k>\frac{n}{2} \Longleftrightarrow p_c(n,\ell)>2.
$$
For $n\geq 5$, we have
 \[ 1+\frac{n\ell}{2}\leq p_c \Longrightarrow \ell \geq \frac12\left( 1+ \sqrt{1-16/n^2}\right).\]  \\
The last condition is equivalent to
$$
k  \geq \frac{n+\sqrt{n^2-16}}{4}+ 1
$$
and $\frac{n}{2} <\frac{n+\sqrt{n^2-16}}{4}+ 1 $, $n\geq 5$, so that
$$
\ell \geq \frac12\left( 1+ \sqrt{1-16/n^2}\right) \Longleftrightarrow  k>\frac{n}{2}.
$$

\end{remark}
\begin{example}
  If $\ell=\frac23$, the conclusion of Theorem \ref{highdimensions} holds for $n=2,3, 4$ with  $\beta\geq\frac{1}3\left( 2+  \frac{5n}3\right)$.
\end{example}

\section{Representation of the solution to the linear Cauchy problem}
\label{subsec:3}

Let $s \geq 0$ be a parameter. We need to solve a family of
parameter dependent linear ($f(u)=0$) Cauchy problems corresponding to
(\ref{eq:DPE}):
\begin{equation}
\label{PL}
 \left \{
\begin{array} {l}
u_{tt}(t,x) - (1+t)^{-2\ell} \Delta u(t,x) + \frac{\beta}{1+t} u_t(t,x)=0,  \, \, t \geq s \\
u(s,x)=g_1(s,x) \\
u_t(s,x)=g_2(s,x).
\end{array}
\right.
\end{equation}
We begin by applying Fourier transform to the solution of the
problem (\ref{PL}). We denote the partial Fourier transform of a tempered distribution or of a function
$u: \mathbf{R_{0}^+}\times \mathbf{R^n}
\rightarrow \mathbf{C}$ with respect to $x$,
 by
~$\hat u=\mathfrak{F} u$ or~$\hat u(t,\cdot)=\mathfrak{F} u(t,\cdot)$.  The notation $\mathfrak{F}^{-1}$ denotes the inverse Fourier transform, in the appropriate sense.\\
Following as in \cite{Ebert_Reissig}, we make the change
of variables $\tau=\frac{(1+t)^{1-\ell}}{1-\ell}|\xi|$ and
$v(\tau,s)=\hat{u}(t,s,\xi)$. If $u(t,s,x)$ is the solution of
(\ref{PL}) then $v(\tau,s)$ satisfies
\begin{equation}
\label{FIVP} \left \{
\begin{array} {l}
v^{\prime\prime}(\tau) + \frac{\beta -\ell}{(1-\ell)\tau} v^{\prime}(\tau) + v(\tau) =0 \\
v \left ( \frac{(1+s)^{1-\ell}|\xi|}{1-\ell} \right)  =\hat{g_1}(s,\xi) \\
v^{\prime}\left ( \frac{(1+s)^{1-\ell}|\xi|}{1-\ell} \right )
=\frac{ \hat{g_2}(s,\xi)}{|\xi|}.
\end{array}
\right.
\end{equation}
Moreover, if we are looking for a solution in the product form
$v(\tau,s)=\tau^{\rho} w(\tau,s)$, then $w(\tau,s)$ is a solution of
the Bessel's differential equation of order $\pm \rho$:
\begin{equation}
\label{Bessel_eq} \tau^2 w^{\prime\prime}(\tau) + \tau
w^{\prime}(\tau) + (\tau^2-\rho^2) w(\tau) =0
\end{equation}
where $\rho =\frac{1-\beta}{2(1-\ell)}$. We will use the set of
Hankel functions, $\{ H^+_{\rho}(\tau), H^-_{\rho}(\tau) \}$ to
write the general solution of the ODE (\ref{Bessel_eq}). First,
according to  \cite{Wirth} we introduce an auxiliary
function
\begin{equation}\label{Hankel}
\psi_{j,\gamma,\delta}(t,s,\xi)=|\xi|^j \left|
                         \begin{array}{cc}
                           H^-_{\gamma}\left( \frac{(1+s)^{1-\ell}|\xi|}{1-\ell} \right) & H^-_{\gamma+\delta}\left(\frac{(1+t)^{1-\ell}|\xi|}{1-\ell} \right) \\
                           H^+_{\gamma}\left( \frac{(1+s)^{1-\ell}|\xi|}{1-\ell} \right) &  H^+_{\gamma+\delta}\left(\frac{(1+t)^{1-\ell}|\xi|}{1-\ell} \right) \\
                         \end{array}
                       \right|
\end{equation}
where $j,\gamma,\delta,s$ are real parameters. Since
$H^{\pm}_{\gamma}=J_{\gamma} \pm i Y_{\gamma}$, we can rewrite it in
the form
\begin{equation}\label{Bessel-Inteiro}
\psi_{j,\gamma,\delta}(t,s,\xi)=2i|\xi|^j \left|
                         \begin{array}{cc}
                           J_{\gamma}\left( \frac{(1+s)^{1-\ell}|\xi|}{1-\ell} \right) & J_{\gamma+\delta}\left(\frac{(1+t)^{1-\ell}|\xi|}{1-\ell} \right) \\
                           Y_{\gamma}\left( \frac{(1+s)^{1-\ell}|\xi|}{1-\ell} \right) &  Y_{\gamma+\delta}\left(\frac{(1+t)^{1-\ell}|\xi|}{1-\ell} \right) \\
                         \end{array}
                       \right|
\end{equation}
if $\gamma, \gamma+\delta \in \mathbf{Z}$, or
\begin{equation}\label{Bessel}
\psi_{j,\gamma,\delta}(t,s,\xi) =2i\csc(\gamma \pi)|\xi|^j \left|
                         \begin{array}{cc}
                           J_{-\gamma}\left( \frac{(1+s)^{1-\ell}|\xi|}{1-\ell} \right) & J_{-\gamma-\delta}\left(\frac{(1+t)^{1-\ell}|\xi|}{1-\ell} \right) \\
                           (-1)^{\delta}J_{\gamma}\left( \frac{(1+s)^{1-\ell}|\xi|}{1-\ell} \right) &  J_{\gamma+\delta}\left(\frac{(1+t)^{1-\ell}|\xi|}{1-\ell} \right) \\
                         \end{array}
                       \right|
\end{equation}
if $\gamma, \gamma+\delta\not\in \mathbf{Z}$, where $J_{\gamma},
Y_{\gamma}$ denote the Bessel functions of the first and second
kind, respectively. We then determine the Fourier multipliers and
the first order partial derivatives with respect to $t$ to represent
$\hat{u}$ and $\hat{u_t}$ in the explicit form.

\begin{lemma}(see \cite{E_M})
Let $u(t,s,x)$ be the solution of (\ref{PL}). Then the partial
Fourier transform of $u$ with respect to $x$, $\hat{u}$, is
represented by
\begin{equation}
\label{FSol} \hat{u}(t,s,\xi)= m_0(t,s,\xi)\hat{g_1}(s,\xi) +
m_1(t,s,\xi)\hat{g_2}(s,\xi)
\end{equation}
with Fourier multipliers and the first order partial derivatives
with respect to $t$ given by
\begin{equation}
\label{M1}
\partial^j_t m_k=\frac{(-1)^{k}\pi
i}{4(1-\ell)}(1+s)^{1+(\beta-1)/2} (1+t)^{(1-\beta)/2-j\ell}
\psi_{1+j-k,\rho+k-1,1-j-k}
\end{equation}
where $\rho=\frac{1-\beta}{2(1-\ell)} $, $k,j=0,1$.
\end{lemma}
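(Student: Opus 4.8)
The plan is to push the reduction of the preceding paragraphs one step further. The change of variables $\tau = (1+t)^{1-\ell}|\xi|/(1-\ell)$ together with the ansatz $v = \tau^\rho w$ turns \eqref{PL}, after Fourier transform, into Bessel's equation \eqref{Bessel_eq} of order $\rho = (1-\beta)/(2(1-\ell))$, for which $\{H^+_\rho,H^-_\rho\}$ is a fundamental system. Hence every solution of \eqref{FIVP} has the form
\[
v(\tau) = \tau^\rho\bigl(c_+ H^+_\rho(\tau) + c_- H^-_\rho(\tau)\bigr),
\]
and the cylinder-function identity $\tfrac{d}{dz}\bigl(z^\nu\mathcal C_\nu(z)\bigr)=z^\nu\mathcal C_{\nu-1}(z)$ gives $v'(\tau)=\tau^\rho\bigl(c_+ H^+_{\rho-1}(\tau)+c_- H^-_{\rho-1}(\tau)\bigr)$. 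With $\tau_s := (1+s)^{1-\ell}|\xi|/(1-\ell)$, the two conditions in \eqref{FIVP} become a linear $2\times 2$ system for $(c_+,c_-)$ with coefficient matrix $\bigl(\begin{smallmatrix}H^+_\rho(\tau_s) & H^-_\rho(\tau_s)\\ H^+_{\rho-1}(\tau_s) & H^-_{\rho-1}(\tau_s)\end{smallmatrix}\bigr)$ and right-hand side built from $\hat g_1(s,\xi)$ and $\hat g_2(s,\xi)/|\xi|$.

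I would then solve this system by Cramer's rule. Using the recurrence $\mathcal C_{\nu-1}(z)=\mathcal C'_\nu(z)+\tfrac{\nu}{z}\mathcal C_\nu(z)$, the determinant of the coefficient matrix reduces to the ordinary Wronskian $W[H^+_\rho,H^-_\rho](\tau_s)=H^+_\rho(H^-_\rho)'-(H^+_\rho)'H^-_\rho$, which by $H^\pm_\rho=J_\rho\pm iY_\rho$ and $W[J_\rho,Y_\rho](z)=2/(\pi z)$ equals $-4i/(\pi\tau_s)$; in particular it never vanishes. Substituting the resulting $c_\pm$ into $v$ at $\tau_t := (1+t)^{1-\ell}|\xi|/(1-\ell)$ and collecting the factors of $\hat g_1$ and $\hat g_2$, the bilinear combinations of Hankel functions that appear are, by inspection, exactly the $2\times2$ determinants of \eqref{Hankel}: the coefficient of $\hat g_1$ is a constant multiple of $\psi_{1,\rho-1,1}$ (order $\rho-1$ at time $s$, order $\rho$ at time $t$) and the coefficient of $\hat g_2$ a constant multiple of $\psi_{0,\rho,0}$. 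Finally the scalar prefactor $\tau_t^\rho\tau_s^{-\rho}/W$ collapses, thanks to $\rho(1-\ell)=(1-\beta)/2$, to $(1+t)^{(1-\beta)/2}$ times a power of $(1+s)$ times $\pm\pi i/(4(1-\ell))$; together this is \eqref{FSol} and \eqref{M1} for $j=0$, the two signs $(-1)^k$ corresponding to $k\in\{0,1\}$.

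The case $j=1$ needs no new work: differentiating $\hat u(t,s,\xi)=v(\tau_t)$ in $t$ and using $\partial_t\tau_t=(1+t)^{-\ell}|\xi|$ and the formula for $v'$ above, one sees that each $t$-derivative keeps the same coefficients $c_\pm$ but replaces the time-$t$ Hankel order $\rho$ by $\rho-1$ and multiplies by $(1+t)^{-\ell}|\xi|$; these are exactly the replacements $\delta\mapsto\delta-1$, $j\mapsto j+1$, and the extra factor $(1+t)^{-j\ell}$ in \eqref{M1}. It then remains to translate \eqref{Hankel} into the forms \eqref{Bessel-Inteiro} and \eqref{Bessel}: the first is immediate from $H^\pm_\gamma=J_\gamma\pm iY_\gamma$, and the second follows, when the orders are not integers, from $Y_\gamma=(\cos(\gamma\pi)J_\gamma-J_{-\gamma})/\sin(\gamma\pi)$ together with an elementary row operation in the determinant; the orders $\rho+k-1$ and $\rho-j$ arising here always differ by an integer, so these two cases are exhaustive.

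The only real difficulty is bookkeeping: one has to track the three indices $(j,\gamma,\delta)$ of $\psi$ and the powers of $(1+s)$, $(1+t)$ and $|\xi|$ through Cramer's rule and the differentiation, and apply the correct Bessel recurrences (both $\mathcal C_{\nu-1}=\mathcal C'_\nu+\tfrac\nu z\mathcal C_\nu$ and $\mathcal C_{\nu+1}=\tfrac\nu z\mathcal C_\nu-\mathcal C'_\nu$) to recognize the bilinear Hankel expressions as the $\psi_{j,\gamma,\delta}$. The Wronskian identity for $\{H^+_\rho,H^-_\rho\}$ is what makes the normalizing constant $\pi i/(4(1-\ell))$ emerge cleanly; no analytic ingredient beyond the classical theory of Bessel functions is involved, which is why the statement can be cited from \cite{E_M}.
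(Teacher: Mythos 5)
Your route --- a fundamental system of Hankel functions for the Bessel equation \eqref{Bessel_eq}, Cramer's rule at $\tau_s$, the Wronskian $W[H^+_\rho,H^-_\rho](z)=-4i/(\pi z)$, and the identity $\frac{d}{dz}(z^\nu\mathcal C_\nu)=z^\nu\mathcal C_{\nu-1}$ --- is exactly the intended one; the paper itself only cites \cite{E_M} for this lemma, and your sketch is the natural completion of the reduction carried out in the section preceding it. Your index bookkeeping is also consistent: the coefficient of $\hat g_1$ involves orders $(\rho-1,\rho)$, i.e.\ $\psi_{1,\rho-1,1}$, the coefficient of $\hat g_2$ involves $(\rho,\rho)$, i.e.\ $\psi_{0,\rho,0}$, and each $\partial_t$ lowers the time-$t$ order by one while contributing a factor $(1+t)^{-\ell}|\xi|$, which matches the shifts $j\mapsto j+1$, $\delta\mapsto\delta-1$ and the factor $(1+t)^{-j\ell}$ in \eqref{M1}.

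The one place where your sketch would actually go wrong is the power of $(1+s)$, which you leave as ``a power of $(1+s)$'' without computing it. If you take the data in \eqref{FIVP} literally, i.e.\ $v'(\tau_s)=\hat g_2(s,\xi)/|\xi|$, then the prefactor $\frac{\pi i}{4}\tau_t^\rho\tau_s^{1-\rho}$ produces $(1+s)^{(1-\ell)(1-\rho)}=(1+s)^{1-\ell+(\beta-1)/2}$ for both multipliers, which is \emph{not} the $(1+s)^{1+(\beta-1)/2}$ of \eqref{M1}. The resolution is that \eqref{FIVP} as printed drops a chain-rule factor: since $\hat u_t=v'(\tau)\,\partial_t\tau$ with $\partial_t\tau=(1+t)^{-\ell}|\xi|$, the correct second datum is $v'(\tau_s)=(1+s)^{\ell}\hat g_2(s,\xi)/|\xi|$. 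Carrying this extra $(1+s)^\ell$ through Cramer's rule restores exactly $(1+s)^{1+(\beta-1)/2}$ for $m_1$ --- the only multiplier used later, and the one consistent with the bound $|m_1|\lesssim(1+s)$ in zone $Z_3$ --- whereas for $m_0$ the same computation yields $(1+s)^{1-\ell+(\beta-1)/2}$, so \eqref{M1} at $k=0$ appears to be off by a factor $(1+s)^{-\ell}$; this is harmless in this paper because $g_1\equiv0$ throughout, but it is worth recording. In short: right method and right structure, but you must not trust \eqref{FIVP} verbatim, and you must actually pin down the $(1+s)$-exponent, since that exponent is precisely the content of the lemma that Proposition \ref{Lq} consumes.
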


\section{ $L^p-L^q$  estimates}
\label{subsec:4}

In order to obtain an estimate of (\ref{FSol}) we have to
distinguish between large and small $\tau$ values. We divide the
extended phase space $\mathbf{R_{0}^+} \times \mathbf{R_{0}^+}
\times \mathbf{R^{+}}$ into three zones. We
define the zone of high frequencies
$$
Z_1=\{(t,s,|\xi|): |\xi| \geq  (1+s)^{\ell-1} \} ;
$$
and the zones of low frequencies
$$
Z_2=\{(t,s,|\xi|): (1+t)^{\ell-1} \leq |\xi| \leq
 (1+s)^{\ell-1} \} ;
$$
$$
Z_3=\{(t,s,|\xi): |\xi| \leq  (1+t)^{\ell-1} \} ,
$$
separated by the boundary $\{(t,|\xi|): (1+t)^{1-\ell} |\xi| = N
(1-\ell) \}$.\\
We consider the cut-off function $\chi \in C^{\infty}(\mathbb{R}^n)$  with
$\chi(r)=1$ for $r\leq \frac12$ and $\chi(r)=0$ for $r\geq 1$ and define
\begin{eqnarray*}
&\chi_1(s,\xi)=1- \chi((1+s)^{1-\ell}|\xi|), \\
&\chi_2(t, s,\xi)= \chi((1+s)^{1-\ell}|\xi|)\left( 1 - \chi((1+t)^{1-\ell}|\xi|)\right) , \\
&\chi_3(t, s,\xi)= \chi((1+s)^{1-\ell}|\xi|) \chi((1+t)^{1-\ell}|\xi|),
\end{eqnarray*}
such that $ \chi_1+\chi_2+\chi_3=1$.

\begin{lemma}{}
\label{Lem1} Let $\ell \in (0,1)$, $\gamma\neq 0$,  and $k \geq 0$. It holds
\begin{eqnarray}\label{psi2}
& & |\xi|^k | \psi_{0,\gamma,0}(t,s,\xi) | \lesssim \\
& \lesssim & \left\{
\begin{array}{ll}
|\xi|^{k-1} (1+s)^{(\ell-1)/2} (1+t)^{(\ell-1)/2} & \mbox{if  $ (t,s,\xi) \in Z_1$} \\

|\xi|^{k-|\gamma|-1/2} (1+s)^{(\ell-1)|\gamma|} (1+t)^{(\ell-1)/2} & \mbox{if  $ (t,s,\xi) \in Z_2$} \\

|\xi|^k(1+s)^{(\ell-1)|\gamma|}(1+t)^{(1-\ell)|\gamma|}
& \mbox{if $ (t,s,\xi) \in Z_3$.}
\end{array}
\right. \nonumber
\end{eqnarray}
for all $s \geq 0$ and $t\geq s$.
\end{lemma}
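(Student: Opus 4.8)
The plan is to reduce $\psi_{0,\gamma,0}$ to an explicit combination of the Bessel functions $J_\gamma,Y_\gamma$ and then estimate it separately on the three zones, in each zone reading off from the definition of the zone which of the two arguments $a:=\frac{(1+s)^{1-\ell}|\xi|}{1-\ell}$ and $b:=\frac{(1+t)^{1-\ell}|\xi|}{1-\ell}$ lies in the small--argument regime and which lies in the large--argument regime. Note that $t\ge s$ forces $0<a\le b$.

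First I would simplify the determinant. Expanding \eqref{Hankel} for $j=\delta=0$ with $H^\pm_\gamma=J_\gamma\pm iY_\gamma$ (which holds whether or not $\gamma\in\mathbf{Z}$, so the two forms \eqref{Bessel-Inteiro} and \eqref{Bessel} need not be treated separately here) gives
\[
\psi_{0,\gamma,0}(t,s,\xi)=2i\bigl(J_\gamma(a)Y_\gamma(b)-J_\gamma(b)Y_\gamma(a)\bigr),
\]
so $|\psi_{0,\gamma,0}|\le 2\bigl(|J_\gamma(a)|\,|Y_\gamma(b)|+|J_\gamma(b)|\,|Y_\gamma(a)|\bigr)$. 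Moreover, the reflection identities $H^\pm_{-\gamma}=e^{\pm i\gamma\pi}H^\pm_\gamma$ (or a direct inspection of the determinant) show $\psi_{0,\gamma,0}=\psi_{0,|\gamma|,0}$, so we may and do assume $\gamma>0$; this is also the reason that only $|\gamma|$ appears in \eqref{psi2}.

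Next I would record the two elementary Bessel estimates, with constants depending only on $\gamma$ and $\ell$: setting $c_\ell:=\frac1{1-\ell}$, for $0<z\le c_\ell$ one has $|J_\gamma(z)|\lesssim z^{\gamma}$ and $|Y_\gamma(z)|\lesssim z^{-\gamma}$ (the hypothesis $\gamma\ne 0$ is used precisely here, to avoid the logarithm present in $Y_0$), while for $z\ge c_\ell$ one has $|J_\gamma(z)|+|Y_\gamma(z)|\lesssim z^{-1/2}$ (from the classical large--argument asymptotics together with continuity on $[c_\ell,\infty)$). The three zones are precisely $Z_1\Leftrightarrow a,b\ge c_\ell$, $Z_2\Leftrightarrow a\le c_\ell\le b$, $Z_3\Leftrightarrow a,b\le c_\ell$. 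Inserting the matching bounds into the inequality above gives: in $Z_1$, $|\psi_{0,\gamma,0}|\lesssim (ab)^{-1/2}$; in $Z_2$, since $a^\gamma\lesssim a^{-\gamma}$ on $(0,c_\ell]$, the dominant contribution is $|J_\gamma(b)|\,|Y_\gamma(a)|\lesssim b^{-1/2}a^{-\gamma}$; in $Z_3$, since $a\le b$ we have $|J_\gamma(a)|\,|Y_\gamma(b)|\lesssim (a/b)^{\gamma}\le (b/a)^{\gamma}$ and $|J_\gamma(b)|\,|Y_\gamma(a)|\lesssim (b/a)^{\gamma}$, hence $|\psi_{0,\gamma,0}|\lesssim (b/a)^{\gamma}$. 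Substituting $a=\frac{(1+s)^{1-\ell}|\xi|}{1-\ell}$ and $b=\frac{(1+t)^{1-\ell}|\xi|}{1-\ell}$ and multiplying by $|\xi|^k$ converts these into, respectively, $|\xi|^{k-1}(1+s)^{(\ell-1)/2}(1+t)^{(\ell-1)/2}$, then $|\xi|^{k-\gamma-1/2}(1+s)^{(\ell-1)\gamma}(1+t)^{(\ell-1)/2}$, and finally $|\xi|^{k}(1+s)^{(\ell-1)\gamma}(1+t)^{(1-\ell)\gamma}$, which are exactly the three lines of \eqref{psi2} with $\gamma=|\gamma|$.

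The computations above are largely bookkeeping, and the one point that genuinely requires care is zone $Z_3$. If one used only the cruder small--argument bounds $|J_\gamma(z)|,|Y_\gamma(z)|\lesssim z^{-|\gamma|}$ (which is all one has for $J_\gamma$ when $\gamma$ is a negative non-integer), the product estimate would retain a spurious factor $(ab)^{-|\gamma|}$, which \emph{blows up} as $|\xi|\to 0$ and does not match \eqref{psi2}; it is precisely the reduction to $\gamma>0$ --- equivalently, the cancellation of the $(ab)^{-\gamma}$ contributions inside the Hankel determinant --- that keeps the $Z_3$ bound under control. Throughout, one must keep track that every implicit constant depends only on $\gamma$ and $\ell$ (not on $t,s,\xi$), and use $t\ge s$, hence $a\le b$, in each of the three zones.
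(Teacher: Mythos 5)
Your proposal is correct and follows essentially the same route as the paper: zone-by-zone application of the small-argument bounds $|J_\gamma(z)|\lesssim z^{\gamma}$, $|Y_\gamma(z)|\lesssim z^{-\gamma}$ (respectively $|H^\pm_\gamma(z)|\lesssim z^{-|\gamma|}$) and the large-argument bound $|H^\pm_\gamma(z)|\lesssim z^{-1/2}$ to the $2\times2$ determinant, with the sharper $J_\gamma$-bound producing the needed cancellation in $Z_3$ exactly as in the paper's use of \eqref{Bessel-Inteiro}--\eqref{Bessel} there. Your explicit reduction to $\gamma>0$ via the reflection identity and the unified $J_\gamma Y_\gamma$ form are only cosmetic refinements of the paper's argument, which states its bounds directly in terms of $|\gamma|$.
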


\begin{proof}{}
For any $ N \in (0,1)$, the following  properties hold:
\begin{eqnarray}
\label{HankelHigh}
 |H^{\pm}_{\gamma}(\tau)| & \lesssim &   \tau^{-\frac12}, \, \tau\in [N, \infty) ;\\
\label{HankelLow} |H^{\pm}_{\gamma}(\tau)| & \lesssim &
\tau^{-|\gamma|}, \, \tau\in (0, N), \, \gamma\neq 0 ;\\
\label{Besselest}
 |J_{\gamma}(\tau)| & \lesssim & \tau^{\gamma}, \, \tau\in (0, N);\\
\label{SecondBesselest} |Y_{\gamma}(\tau)| & \lesssim &
\tau^{-\gamma}, \, \tau\in (0, N), \, \gamma\neq 0.
\end{eqnarray}
To conclude the estimates in zones $Z_1$ and $Z_2$ we may use the
representation (\ref{Hankel}),  estimates (\ref{HankelHigh}) and
(\ref{HankelLow}), whereas in the zone $Z_3$ we use
(\ref{Bessel-Inteiro})-(\ref{Bessel}) and
(\ref{Besselest})-(\ref{SecondBesselest}).
\end{proof}

\begin{proposition}\label{Lq}
Let $ 0 \leq \ell < 1$, $ \beta > 1$, $k\geq 0$, $n \in \mathbb{N} $ and $q\geq 2$.
Assume that $g_1 \doteq 0$ and
$g_2 \in  L^1(\mathbf{R^n})\cap \dot H^{[k-1]_+}(\mathbf{R}^n) \cap L^m(\mathbf{R^n})$, with
 $m\in [1,2]$ such that
 \[m=m(k, n, q )> \frac{nq}{n+q(1-k)}, \quad k\in [0,1).\]
  Then the solution $u$
of the problem (\ref{PL}) satisfies the following \emph{a priori}
estimates:
\begin{description}
\item[(i)]
If  $1< \beta < \ell + 2n(1-\ell)\left( 1-\frac1q\right)+2k(1-\ell)$
then \begin{equation}
 |||D|^k u(t,s,\cdot) ||_{L^q}   \lesssim
(1+t)^{\frac{\ell-\beta}{2}} (1+s)^{1+ \frac{\beta-\ell}{2} +(\ell-1)\left( n\left(1-\frac1q\right)+k\right)}
\left(|| g_2 ||_{L^1} +  (1+s)^{ n(1-\ell)\left(1-\frac1m\right)}|| g_2 ||_{L^m}\right)
  \end{equation}
  for $k\in [0,1)$ and $ 2\leq q<\frac{nm}{[n-m+mk]_+}$, whereas for $k\geq 1$
   \begin{equation}
 || u(t,s,\cdot) ||_{\dot H^{k}}   \lesssim
(1+t)^{\frac{\ell-\beta}{2}} (1+s)^{1+ \frac{\beta-\ell}{2} +(\ell-1)\left( \frac{n}{2}+k\right)}
\left( || g_2 ||_{L^1} + (1+s)^{(1-\ell)\left( \frac{n}{2}+k-1\right)}  \| g_2\|_{\dot H^{k-1}}\right);
  \end{equation}
\item[(ii)] If  $ \beta = \ell + 2n(1-\ell)\left( 1-\frac1q\right)+2k(1-\ell) $
then
   \begin{equation}
 || |D|^ku(t,s,\cdot) ||_{L^q}   \lesssim
(1+s)(1+t)^{(\ell-1)\left( n\left(1-\frac1q\right)+k\right)}
\left( \left(\ln{\left ( \frac{e+t}{e+s} \right )}\right)^{1-\frac1q}|| g_2 ||_{L^1} +  (1+s)^{ n(1-\ell)\left(1-\frac1m\right)} || g_2 ||_{L^m}\right)
 \end{equation}
  for  $k\in [0,1)$ and $ 2\leq q<\frac{nm}{[n-m+mk]_+}$,
 whereas for $k\geq 1$
   \begin{equation}
|| u(t,s,\cdot) ||_{\dot H^{k}}  \lesssim
(1+s)(1+t)^{(\ell-1)\left(\frac{n}{2}+k\right)}
\left( \left(\ln{\left ( \frac{e+t}{e+s} \right )}\right)^{\frac12}|| g_2 ||_{L^1} + (1+s)^{(1-\ell)\left( \frac{n}{2}+k-1\right)}  || g_2 ||_{\dot H^{k-1}}\right);
 \end{equation}
\item[(iii)]
If  $ \beta > \ell + 2n(1-\ell)\left( 1-\frac1q\right)+2k(1-\ell) $
then
   \begin{equation}
 || |D|^ku(t,s,\cdot) ||_{L^q}   \lesssim
(1+s)(1+t)^{(\ell-1)\left( n\left(1-\frac1q\right)+k\right)}
\left(|| g_2 ||_{L^1} + (1+s)^{ n(1-\ell)\left(1-\frac1m\right)} || g_2 ||_{L^m}\right)
 \end{equation}
  for $k\in [0,1)$ and $ 2\leq q<\frac{nm}{(n-m+mk)_+}$ and for $k\geq 1$
   \begin{equation}
 ||  u(t,s,\cdot) ||_{\dot H^{k}}   \lesssim
(1+s)(1+t)^{(\ell-1)\left(\frac{n}{2}+k\right)}
\left( || g_2 ||_{L^1} + (1+s)^{(1-\ell)\left( \frac{n}{2}+k-1\right)}  \| g_2\|_{\dot H^{k-1}}\right);
 \end{equation}
  \end{description}
\end{proposition}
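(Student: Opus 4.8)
The plan is to reduce the whole estimate to weighted $L^r$-bounds for the Fourier multiplier $|\xi|^k\psi_{0,\rho,0}(t,s,\xi)$, treating the three frequency zones separately. Since $g_1\equiv 0$, the representation \eqref{FSol}--\eqref{M1} with index $k=1$ and $j=0$ gives $\widehat{|D|^k u}(t,s,\xi)=|\xi|^k m_1(t,s,\xi)\,\hat g_2(s,\xi)$ with
\[
|\xi|^k m_1(t,s,\xi)=\frac{-\pi i}{4(1-\ell)}\,(1+s)^{\frac{\beta+1}{2}}(1+t)^{\frac{1-\beta}{2}}\,|\xi|^k\psi_{0,\rho,0}(t,s,\xi),\qquad \rho=\frac{1-\beta}{2(1-\ell)},
\]
so that $\gamma=\rho\neq 0$ (as $\beta>1$) and $(1-\ell)|\rho|=\frac{\beta-1}{2}$. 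First I would split $1=\chi_1+\chi_2+\chi_3$ and bound each of $\mathcal F^{-1}(\chi_i\,|\xi|^k m_1\,\hat g_2)$ in $L^q$; the algebra is driven by the two cancellations $\frac{\beta+1}{2}+(\ell-1)|\rho|=1$ and $\frac{1-\beta}{2}+(1-\ell)|\rho|=0$, which collapse the prefactors in $Z_2,Z_3$ to a pure power of $(1+s)$.

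In the low-frequency zones I would feed the bounds of Lemma \ref{Lem1} into Hausdorff--Young ($\mathcal F^{-1}\colon L^{q'}\to L^q$ for $q\ge 2$) and Hölder with $\|\hat g_2\|_{L^\infty}\le\|g_2\|_{L^1}$. On $Z_3$ this yields $\big||\xi|^k m_1\chi_3\big|\lesssim (1+s)\,|\xi|^k\chi_3$, supported in $|\xi|\lesssim (1+t)^{\ell-1}$, hence a contribution $\lesssim (1+s)\,\big\||\xi|^k\chi_3\big\|_{L^{q'}}\|g_2\|_{L^1}\sim (1+s)(1+t)^{(\ell-1)(n(1-1/q)+k)}\|g_2\|_{L^1}$, which is exactly the rate in (iii). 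On $Z_2$ one gets similarly $\big||\xi|^k m_1\chi_2\big|\lesssim (1+s)(1+t)^{\frac{\ell-\beta}{2}}|\xi|^{k-|\rho|-1/2}\chi_2$, and the remaining factor is the $L^{q'}$-norm of $r\mapsto r^{k-|\rho|-1/2}$ over the annulus $(1+t)^{\ell-1}\le|\xi|\le (1+s)^{\ell-1}$. Here the trichotomy of the statement appears: the sign of $(k-|\rho|-\tfrac12)q'+n$ is positive, zero or negative precisely when $\beta$ is below, equal to, or above $\ell+2n(1-\ell)(1-1/q)+2k(1-\ell)$ (recall $|\rho|=\frac{\beta-1}{2(1-\ell)}$); the annulus integral is then dominated by the endpoint $(1+s)^{\ell-1}$, produces a logarithm $(\ln\frac{e+t}{e+s})^{1-1/q}$, or is dominated by the endpoint $(1+t)^{\ell-1}$, and these reproduce exactly the prefactors in (i), (ii), (iii).

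For the high-frequency zone I would distinguish $k\ge 1$ and $k\in[0,1)$. Lemma \ref{Lem1} gives $\big||\xi|^k m_1\chi_1\big|\lesssim (1+s)^{\frac{\beta+\ell}{2}}(1+t)^{\frac{\ell-\beta}{2}}|\xi|^{k-1}\chi_1$, supported in $|\xi|\gtrsim (1+s)^{\ell-1}$. If $k\ge 1$, then $|\xi|\,m_1\chi_1$ is bounded uniformly in $\xi$, so Plancherel yields $\big\||D|^k u\,\chi_1\big\|_{L^2}\lesssim (1+s)^{\frac{\beta+\ell}{2}}(1+t)^{\frac{\ell-\beta}{2}}\|g_2\|_{\dot H^{k-1}}$. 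If $k\in[0,1)$, Hausdorff--Young, Hölder and $\|\hat g_2\|_{L^{m'}}\le\|g_2\|_{L^m}$ (valid since $m\le 2$) reduce matters to the $L^r$-norm of $|\xi|^{k-1}\chi_1$ with $\frac1r=\frac1m-\frac1q$; this norm is finite precisely when $(k-1)r+n<0$, i.e. $q<\frac{nm}{n-m+mk}$ — equivalently $m>\frac{nq}{n+q(1-k)}$, which is the hypothesis — and equals a constant times $(1+s)^{(\ell-1)(k-1+n/m-n/q)}$. A direct check shows that in every case the resulting $(1+s)$-exponent is the one claimed. Finally I would observe that in case (i) the $Z_1$ and $Z_2$ pieces carry the same (slowest) time decay $(1+t)^{(\ell-\beta)/2}$ while $Z_3$ decays faster, whereas in cases (ii)--(iii) the slowest term comes from $Z_2$ and $Z_3$ and the $Z_1$ piece is absorbed into it using $t\ge s$ together with $\frac{\ell-\beta}{2}\le(\ell-1)(n(1-1/q)+k)$; adding the three contributions gives the stated estimates.

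The hard part will be the bookkeeping in zone $Z_2$: one has to verify that the trichotomy in $\beta$ is exactly the trichotomy for the sign of $(k-|\rho|-1/2)q'+n$, that each of the three evaluations of the annulus integral reproduces the precise $(1+s)$- and $(1+t)$-powers of (i)--(iii) and the logarithm of (ii), and that the high-frequency admissibility range $q<\frac{nm}{n-m+mk}$ matches the stated restriction on $m$. Once these elementary but somewhat lengthy identities are pinned down, the rest is routine.
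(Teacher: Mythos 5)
Your proposal is correct and follows essentially the same route as the paper: the same decomposition into the three frequency zones via $\chi_1+\chi_2+\chi_3$, the same multiplier bounds from Lemma \ref{Lem1} combined with Hausdorff--Young, H\"older and Plancherel, the same trichotomy in $\beta$ arising from the sign of $(k-\alpha)q'+n$ in the $Z_2$ annulus integral (with $\alpha=|\rho|+\tfrac12=\frac{\beta-\ell}{2(1-\ell)}$), and the same absorption of the faster-decaying zones using $t\geq s$. The bookkeeping you defer does check out and reproduces the stated exponents.
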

\begin{proof}
Let $k\geq 0$, $q\geq 2$, $\ell \in (0,1)$ and $
\beta > 1 $.\\
Considerations in $Z_{3}$: In the zone $Z_3$, from
Lemma \ref{Lem1} we may estimate
\[
|\xi|^k  | m_1(t,s,\xi) | \lesssim |\xi|^k(1+s).\] By using
Haussdorff-Young inequality and H\"{o}lder inequality, setting
\[ \frac1r = 1-\frac1q, \]
for $q\geq 2$, one may estimate
\begin{align*}
\| \mathfrak{F}^{-1}(\chi_3(s,\xi) |\xi|^k m_1(t,s,\xi))\ast
g_2\|_{L^q}
    & \lesssim \| \chi_3(s,\xi) |\xi|^k m_1(t,s,\xi)\hat g_2\|_{L^{q'}}\\
     &\lesssim \| \chi_3(s,\xi) |\xi|^k m_1(t,s,\xi)\|_{L^r}\|\hat g_2\|_{L^{\infty}} \\
         & \lesssim\, (1+s) (1+t)^{(\ell-1)\left( n\left(1-\frac1q\right)+k\right)} \|g_2\|_{L^1}
             \end{align*}
thanks to
    \[ \| \chi_3(s,\xi) |\xi|^k \|^r_{L^r(Z_3)}=
     \int_{Z_3} |\xi|^{rk}\ d\xi
    \lesssim (1+t)^{(kr+n)(\ell-1)}.
     \]
If   $ \beta <  \ell + 2n(1-\ell)\left( 1-\frac1q\right)+2k(1-\ell) $,  by using that
   \begin{equation}
\label{betapequeno}
  (1+s)(1+t)^{(\ell-1)\left( n\left(1-\frac1q\right)+k\right)}\leq (1+t)^{\frac{\ell-\beta}{2}} (1+s)^{1+ \frac{\beta-\ell}{2} +(\ell-1)\left( n\left(1-\frac1q\right)+k\right)}
  \end{equation}
we obtain
$$
\| \mathfrak{F}^{-1}(\chi_3(s,\xi) |\xi|^k m_1(t,s,\xi))\ast
g_2\|_{L^q} \lesssim\, (1+t)^{\frac{\ell-\beta}{2}} (1+s)^{1+ \frac{\beta-\ell}{2} +(\ell-1)\left( n\left(1-\frac1q\right)+k\right)} \|g_2\|_{L^1}.
$$

     Considerations in $Z_{1}$: In the zone $Z_1$ from
Lemma \ref{Lem1} we may estimate
$$ |\xi|^k| m_1(t,s,\xi) | \lesssim
|\xi|^{k-1}(1+s)^{(\beta+\ell)/2}(1+t)^{(\ell-\beta)/2}.
$$
By using Haussdorff-Young inequality and H\"{o}lder inequality,
setting
\[ \frac1r = \frac1{q'}-\frac1{m'}= \frac1m-\frac1q, \quad m\in [1,2)\]
for   $ 2\leq q <\frac{nm}{(n-m+mk)_+} $ and $k\in [0,1)$, one may estimate
\begin{align*}
&\|\mathfrak{F}^{-1}(\chi_1(s,\xi) |\xi|^km_1(t,s,\xi))\ast
g_2\|_{L^q}
     \lesssim \| \chi_1(s,\xi)  |\xi|^km_1(t,s,\xi)\hat g_2\|_{L^{q'}}\\
     &\lesssim \| \chi_1(s,\xi)  |\xi|^k m_1(t,s,\xi)\|_{L^r}\|\hat g_2\|_{L^{m'}} \\
         & \lesssim\, (1+t)^{\frac{\ell-\beta}{2}}
        (1+s)^{\frac{\ell+\beta}{2}}
(1+s)^{ \left(n \left( \frac1m-\frac1q \right)+k-1 \right) (\ell-1)} \|g_2\|_{L^m}
  \end{align*}
  thanks to
$$ \|  \chi_1(s,\xi)|\xi|^k
\|^r_{L^r(Z_1)} = \int_{|\xi|\geq (1+s)^{\ell-1}} |\xi|^{r(k-1)}
d\xi \lesssim
(1+s)^{\left(n+r(k-1)\right)(\ell-1)}, \quad r(k-1)+n<0.
$$
For $k\geq 1$ we get
 \[ \|\mathfrak{F}^{-1}(\chi_1(s,\xi) |\xi|^k m_1(t,s,\xi))\ast
g_2\|_{L^2} \lesssim  (1+s)^{\frac{\ell+\beta}{2}}
 (1+t)^{\frac{\ell-\beta}{2}}  \| g_2\|_{\dot H^{k-1}}.
\]
If   $ \beta \geq \ell + 2n(1-\ell)\left( 1-\frac1q\right)+2k(1-\ell) $,  by using that
\begin{equation}
\label{betagrande}
      (1+s)^{\frac{\ell+\beta}{2}}(1+t)^{\frac{\ell-\beta}{2}} \lesssim   (1+t)^{(\ell-1)\left( n\left(1-\frac1q\right)+k\right)}  (1+s)^{\ell+(1-\ell)\left( n\left(1-\frac1q\right)+k\right)}
\end{equation}
we may gluing the estimates in zones $Z_1$ and $Z_3$,
  namely,  for $ 2\leq q <\frac{nm}{(n-m+mk)_+} $ and $k\in [0,1)$, we get
\begin{align*}
\|\mathfrak{F}^{-1}(\chi_1(s,\xi) |\xi|^km_1(t,s,\xi))\ast
g_2\|_{L^q}
     \lesssim
      (1+t)^{(\ell-1)\left( n\left(1-\frac1q\right)+k\right)}  (1+s)^{1+  n(1-\ell)\left(1-\frac1m\right)} \|g_2\|_{L^m},
  \end{align*}
  whereas  for $k\geq 1$ we get
  \begin{align*}
\|\mathfrak{F}^{-1}(\chi_1(s,\xi) |\xi|^km_1(t,s,\xi))\ast
g_2\|_{L^2}
     \lesssim
       (1+t)^{(\ell-1)\left( n\left(1-\frac1q\right)+k\right)}  (1+s)^{\ell+(1-\ell)\left( n\left(1-\frac1q\right)+k\right)}  \| g_2\|_{\dot H^{k-1}}.
  \end{align*}
Considerations in $Z_{2}$: In the zone $Z_2$,
from Lemma \ref{Lem1} we may estimate
     $$
|\xi|^k  | m_1(t,s,\xi) | \lesssim \
|\xi|^{k-\alpha}(1+s)(1+t)^{(\ell-\beta)/2},
$$
where $\alpha=\frac{\beta-\ell}{2(1-\ell)}$.
Setting
\[ \frac1r = 1-\frac1q, \]
for $q\geq 2$ then thanks to
    \begin{align*}
    &\| \chi_2(s,\xi) |\xi|^{k-\alpha} \|^r_{L^r(Z_2)}=
     \int_{Z_2}  |\xi|^{(k-\alpha)r}  d\xi
   \lesssim \left\{
\begin{array}{lll}
(1+s)^{(\ell-1)\left(n + r(k-\alpha)\right)} & \mbox{if $ \alpha <
k+n\left(1-\frac1q\right) $} \\
\ln{\left ( \frac{e+t}{e+s} \right )} & \mbox{if $ \alpha =
k+n\left(1-\frac1q\right) $} \\
(1+t)^{(\ell-1)\left(n + r(k-\alpha)\right)} & \mbox{if $ \alpha >
k+n\left(1-\frac1q\right) $}
\end{array}
\right.
\end{align*}
one may estimate
\begin{align*}
&\| \mathfrak{F}^{-1}(\chi_2(s,\xi) |\xi|^k m_1(t,s,\xi))\ast
g_2\|_{L^q}
     \lesssim \| \chi_2(s,\xi) |\xi|^k m_1(t,s,\xi)\hat g_2\|_{L^{q'}}\\
     &\lesssim \| \chi_2(s,\xi) |\xi|^k m_1(t,s,\xi)\|_{L^r}\|\hat g_2\|_{L^{\infty}} \\
     &\lesssim  (1+s)\|g_2\|_{L^1}\left\{
\begin{array}{lll}
(1+t)^{(\ell-\beta)/2}
(1+s)^{(\ell-1)\left( n\left(1-\frac1q\right)+k\right)+(\beta-\ell)/2} & \mbox{if    $ 1 < \beta <\ell + 2n(1-\ell)\left(1-\frac1q\right)+2k(1-\ell) $} \\
(1+t)^{(\ell-\beta)/2}  \left(\ln{\left ( \frac{e+t}{e+s} \right )}\right)^{1-\frac1q} & \mbox{if  $ \beta = \ell + 2n(1-\ell)\left(1-\frac1q\right)+2k(1-\ell) $}\\
(1+t)^{(\ell-1)\left( n\left(1-\frac1q\right)+k\right)} & \mbox{if $
\beta > \ell + 2n(1-\ell)\left(1-\frac1q\right)+2k(1-\ell) $}
\end{array}
\right.
             \end{align*}
\end{proof}

\section{Global existence results}\label{GESD}
\label{subsec:6}

By Duhamel's principle, a function $u\in X$, where~$X$ is a suitable
space, is a solution to~(\ref{eq:DPE}) if, and only if, it satisfies
the equality
\begin{equation}\label{eq:fixedpoint}
u(t,x) =  u^{0}(t, x) + \int_0^t K(t, s, x) \ast_{(x)} \,
f(u(s,x))\, ds\,, \qquad \text{in~$X$,}
\end{equation}
 where $u^0(t,x)$ is the solution to the linear Cauchy problem
\begin{equation}
\label{P}
 \left \{
\begin{array} {l}
u_{tt}(t,x) - (1+t)^{-2\ell} \Delta u(t,x) + \frac{\beta}{1+t} u_t(t,x)=0, \, \, t \geq 0  \\
u(0, x)=0 \\
u_t(0,x)=u_1(x)
\end{array}
\right.
\end{equation}
and  $K(t, s, x) \ast_{(x)} f(u(s,x))$
is the solution to the linear Cauchy problem
(\ref{PL}) with $g_1\equiv 0$ and $g_2\equiv f(u)$, being  $K(t,s,x)=\mathfrak{F}^{-1} (m_1)(t,s,x)$,  i.e.
\[
K(t,s,x)= -\frac{\pi i}{4(1-\ell)}(1+s)^{1+(\beta-1)/2}
(1+t)^{(1-\beta)/2} \mathfrak{F}^{-1} \left (\psi_{0,\rho,0}\right ) (t,s,x) .
\]
The proof of our global existence results is based on the following
scheme: We  define an appropriate data function space $\mathcal{D}$
 and an evolution space for solutions X(T)
%
%
equipped with a  norm relate to the estimates  of
solutions to the linear problem \eqref{P} such that
\[
\|u^{0}\|_{X} \leq C\,\| u_1 \|_{\mathcal D}.
\]

%
For any~$u\in X$, we define the operator~$P$ by
%
\[
P: u\in X(T)\rightarrow Pu(t,x) := u^{0}(t, x) + Fu(t,x),
\]
with
\[
Fu(t, x) \doteq \int_0^t K(t, s, x) \ast_{(x)} \, f(u(s,x))\,
ds\,,
\]
%
then we prove the estimates
\begin{align*}
\|Pu\|_{X}
    & \leq C\,\| u_1\|_{\mathcal{D}}+ C_1(t)\|u\|_{X}^p\,, \\
\|Pu-Pv\|_{X}
    & \leq C_2(t)\|u-v\|_{X} \bigl(\|u\|_{X}^{p-1}+\|v\|_{X}^{p-1}\bigr)\,.
\end{align*}
The estimates for the image $Pu$ allow us to apply Banach's fixed
point theorem. In this way we get simultaneously a unique solution
to $Pu=u$ locally in time for large data and globally in time for
small data. To prove the local (in time) existence we use that
$C_1(t), C_2(t)$ tend to zero as $t$ goes to zero, whereas to prove
the global (in time) existence we use $C_1(t)\leq C$ and $C_2(t)\leq
C$ for all $t\geq 0$.

\subsection{ Proof of Theorem \ref{highdimensions}}

\begin{proof}(Theorem \ref{highdimensions})
   We  define the space

\[
\label{eq:Xsp} X(T)
     \doteq  C([0, \infty),
 H^k(\mathbf{R}^n)),  \quad   \frac{n\ell}2+1 = k\leq p_c(n,\ell)
\]
equipped with the norm
\begin{eqnarray*}
\label{xnorm}
\|u\|_{X(T)}\!\doteq\!\left\{ \begin{array}{lll}
\!\!\displaystyle\sup_{t\in[0,T]} (1+t)^{(1-\ell)\frac{n}{2}}\left(\|  u(t,\cdot)\|_{L^2}+(1+t)^{(1-\ell)k} \|  u(t,\cdot)\|_{\dot H^{k}}\right), \quad \bar k > k,\\
 \!\!\displaystyle\sup_{t\in[0,T]}  (1+t)^{(1-\ell)\frac{n}{2}}\left( \|  u(t,\cdot)\|_{L^2}+(1+t)^{(1-\ell)\bar k}(\ln(e+t))^{-\frac12} \|  u(t,\cdot)\|_{\dot H ^{\bar k}}\right), \quad \bar k =k,\\
\!\!\!\displaystyle\sup_{t\in[0,T]}\!\! \left( (1+t)^{(1-\ell)\frac{n}{2}}
  \| u(t,\cdot)\|_{L^2}\!+\!(1+t)^{\frac{\beta-\ell}{2}}\!\|u(t,\cdot)\|_{\dot H ^{k}} \right),    \frac{n\ell}{2} \leq \bar k < k,
\end{array}
\right.
\end{eqnarray*}
where $\bar k\doteq  \frac{\beta-\ell}{2(1-\ell)}- \frac{n}{2}$.\\
 We have to prove the global existence in time of the solution $u$ assuming that there exists $\delta > 0$ such that
$$
u_1\in \mathcal{D} \doteq H^{k-1}(\mathbf{R}^n) \,  \cap \,    L^1(\mathbf{R}^n),  \qquad  || u_1||_\mathcal{D}\leq \delta.
$$
Thanks to Proposition
\ref{Lq},  $u^{0} \in X(T)$ and it satisfies
%
\[
\|u^{0}\|_{X} \leq C\, \| u_1\|_{\mathcal{D}}.
\]
It remains to show the estimates
\begin{align}
\label{eq:well_th2} \|Fu\|_{X}
    & \leq C\|u\|_{X}^p\,, \\
\label{eq:contraction} \|Fu-Fv\|_{X}
    & \leq C\|u-v\|_{X} \bigl(\|u\|_{X}^{p-1}+\|v\|_{X}^{p-1}\bigr)\,.
\end{align}
Let us begin by prove (\ref{eq:well_th2}). Taking into account the definition of the norm  in the function space $X(T)$, we split the proof accordingly to size of $\beta$:

\begin{itemize}
\item The case $\bar k > k$,  i.e., $\beta> \ell+ n(1-\ell) + 2k(1-\ell)$:

Applying Proposition \ref{Lq}  we
have
\[
  \|  Fu(t, \cdot)\|_{L^2}\lesssim \int_0^t (1+s) (1+t)^{(\ell-1)\frac{n}{2}} \left(\| |u(s, \cdot)|^p\|_{L^1}+ (1+s)^{(1-\ell)n\left( 1-\frac{1}{m}\right)}\| |u(s, \cdot)|^p\|_{L^{m}}\right) ds\]
and
  \[
  \|  Fu(t, \cdot)\|_{\dot H^{k}}\lesssim \int_0^t (1+s) (1+t)^{(\ell-1)\left(\frac{n}{2}+k\right)} \left(\| |u(s, \cdot)|^p\|_{L^1}+ (1+s)^{(1-\ell)\left( \frac{n}{2}+k-1\right)}\| |u(s, \cdot)|^p\|_{\dot H^{k-1}}\right) ds.\]
  %
First,   we use  Gagliardo-Nirenberg inequality
 \begin{eqnarray*}
\| u(s, \cdot)\|_{L^q} \lesssim  \| u(s, \cdot)\|_{L^2}^{1-\theta}\|  u(s, \cdot)\|_{\dot H^{k}}^{\theta}, \quad \theta=\frac{n}{k}\left( \frac12 - \frac1{q}\right),
\end{eqnarray*}
by taking $q=p$ and $q=mp$, where $m\in (1,2]$ such that $ m>\frac{2n}{n+2}$.
Since $u\in X(T)$ we may estimate
\begin{eqnarray*}
\| |u(s, \cdot)|^p\|_{L^{\frac{q}{p}}}& =&\| u(s, \cdot)\|_{L^{q}}^p\lesssim \|  u(s, \cdot)\|_{\dot H^{k}}^{p\theta} \| u(s, \cdot)\|_{L^2}^{(1-\theta)p} \\
&\lesssim&(1+s)^{(\ell-1)\left(\frac{n}{2} + k\right)\theta p+(\ell-1)\frac{n(1-\theta)p}{2} }
\|u\|_{X(T)}^p\lesssim(1+s)^{n(\ell-1)(p-\frac{p}{q})}
\|u\|_{X(T)}^p,
\end{eqnarray*}
for all $p\geq 2$, $q=p$ and $q=mp$.
       Therefore,  we obtain
   \begin{eqnarray*}
  \| Fu(t, \cdot)\|_{L^2}
 & \lesssim &(1+t)^{(\ell-1)\frac{n}{2}} \int_0^t (1+s)^{1 + n(\ell-1)\left(p-1\right)} ds \| u\|_{X(T)}^p  \\
 &+& (1+t)^{(\ell-1)\frac{n}{2}} \int_0^t (1+s)^{1+(1-\ell)n\left( 1-\frac{1}{m}\right)+n(\ell-1)\left(p-\frac{1}{m}\right)}  ds \| u\|_{X(T)}^p\\
 &\lesssim & (1+t)^{(\ell-1)\frac{n}{2}}\| u\|_{X(T)}^p,\end{eqnarray*}
  for    $p> 1+ \frac{2}{n(1-\ell)}.$\\
 Then,  in order to estimate  $\|  Fu(t, \cdot)\|_{\dot H^{k}}$,
 we may  use that $H^k(\mathbf{R^n})$, with $ k>\frac{n}{2}$, is  imbedded   into $L^\infty(\mathbf{R^n})$. Indeed,
  thanks to Corollary \ref{corollaryfractionalpowers},    for
  $p > \max\{1, k-1\}$ we may estimate
 \[\| |u(s, \cdot)|^p\|_{{\dot H}^{k-1}}\le C \| u(s, \cdot)\|_{{\dot H}^{k-1}}\|u(s, \cdot)\|_{L^\infty}^{p-1}.\]
Since $u\in X(T)$ we have
 $$
\| u(s, \cdot)\|_{{\dot H}^{k-1}} \lesssim (1+s)^{(\ell-1)\left(\frac{n}{2}+k-1\right)}\|
u\|_{X(T)},
$$
and
 thanks to Lemma \ref{lem:Sobolev} for  $ \tilde k <\frac{n}{2}<k$ it follows
\[ \|u(s, \cdot)\|_{L^\infty}\lesssim \| u(s, \cdot)\|_{\dot{H}^{\tilde k}} + \| u(s, \cdot)\|_{\dot{H}^{k}} \lesssim (1+s)^{(\ell-1)\left(\frac{n}{2}+\tilde k\right)}
\|u\|_{X(T)}.\]
  If we choose $\tilde k=\frac{n}{2}-\epsilon_0$, with $\epsilon_0$ sufficiently small, then
  \[ \| |u(s, \cdot)|^p\|_{{\dot H}^{k-1}} \lesssim
  (1+s)^{(\ell-1)\left(\frac{n}{2}+k-1\right)+(\ell-1)(n-\epsilon_0)(p-1)} \|u\|_{X(T)}^p, \]
hence
    \begin{eqnarray*}
  \|  Fu(t, \cdot)\|_{\dot H^{k}}&\lesssim & (1+t)^{(\ell-1)\left(\frac{n}{2}+k\right)}\int_0^t (1+s)^{1 + (n-\epsilon_0)(\ell-1)\left(p-1\right)} ds \| u\|_{X(T)}^p \\
  &\lesssim &  (1+t)^{(\ell-1)\left(\frac{n}{2}+k\right)}\| u\|_{X(T)}^p,\end{eqnarray*}
  for    $p> 1+ \frac{2}{n(1-\ell)}.$\\

\item  The case $\frac{n\ell}{2} \leq \bar k < k$,  i.e., $ \ell + n(1-\ell)(1+\ell)\leq  \beta < \ell+ n(1-\ell) + 2k(1-\ell)$:\\
Applying again Proposition \ref{Lq}  we
have
\[
  \|  Fu(t, \cdot)\|_{L^2}\lesssim \int_0^t (1+s) (1+t)^{(\ell-1)\frac{n}{2}} \left(\| |u(s, \cdot)|^p\|_{L^1}+ (1+s)^{(1-\ell)n\left( 1-\frac{1}{m}\right)}\| |u(s, \cdot)|^p\|_{L^m}\right) ds.\]
    Now we use the fractional Sobolev embedding
 \begin{eqnarray*}
\| u(s, \cdot)\|_{L^q} \lesssim  \| u(s, \cdot)\|_{\dot H^{ k(q)}}, \quad   k(q)= n\left(\frac12-\frac1q\right), \quad  2\leq q<\infty,
\end{eqnarray*}
by taking $q=p$ and $q=mp$, where $\frac{2n}{n+2} < m \leq 2$. On the one hand, if $ \beta > \ell+ n(1-\ell) + 2k_2(1-\ell)$, with
$k_2 =k(mp)=n\left(\frac12-\frac1{mp}\right)$
we may estimate
\begin{eqnarray*}
\| |u(s, \cdot)|^p\|_{L^{\frac{q}{p}}}& =&\| u(s, \cdot)\|_{L^{q}}^p\lesssim    \| u(s, \cdot)\|_{\dot H^{ k(q)}}^p  \\
&\lesssim& \| u(s, \cdot)\|_{L^2}^{(1-\theta)p} \| u(s, \cdot)\|_{\dot H^{\bar k}}^{p\theta}\lesssim
(1+s)^{(\ell-1)\left(\frac{n}{2} + k(q) \right)p}(\ln(e+s))^{\frac{p\theta}2}
\|u\|_{X(T)}^p\\
&\lesssim&(1+s)^{n(\ell-1)\left(p-\frac{p}{q}\right)}(\ln(e+s))^{\frac{p \tilde k}{2\bar k}}
\|u\|_{X(T)}^p,
\end{eqnarray*}
for $q=p$ and $q=mp$,  with $\theta \bar k= k(q)$ and    $p\geq  2$.
Hence, as before we conclude
\[ \| Fu(t, \cdot)\|_{L^2}\lesssim(1+t)^{\frac{n}{2}(\ell-1)}\| u\|_{X(T)}^p,\]
    for   $p> 1+ \frac{2}{n(1-\ell)}$.  On the other hand,     if
$1<  \beta \leq \ell+ n(1-\ell) + 2k_2(1-\ell)$ we may estimate
\begin{eqnarray*}
\| |u(s, \cdot)|^p\|_{L^1}& =&\| u(s, \cdot)\|_{L^{p}}^p\lesssim  \| u(s, \cdot)\|_{\dot H^{ k_1}}^p  \\
&\lesssim&
(1+s)^{p\max\{(\ell-1)\left(\frac{n}{2}+k_1\right), \frac{\ell-\beta}{2}\}} (\ln(e+s))^{\frac{p}2}
\|u\|_{X(T)}^p,
\end{eqnarray*}
with $ k_1=k(p)= n\left(\frac12-\frac1p\right)$, whereas
\begin{eqnarray*}
\| |u(s, \cdot)|^p\|_{L^m}& =&\| u(s, \cdot)\|_{L^{mp}}^p\lesssim    \| u(s, \cdot)\|_{\dot H^{ k_2}}^p  \\
&\lesssim& (1+s)^{\frac{\ell-\beta}{2} p} (\ln(e+s))^{\frac{p}2}
\|u\|_{X(T)}^p,
\end{eqnarray*}
with   $k_2=n\left(\frac12-\frac1{mp}\right)$.
Therefore
   \begin{eqnarray*}
  \| Fu(t, \cdot)\|_{L^2}
 & \lesssim &(1+t)^{\frac{n}{2}(\ell-1)}\int_0^t (1+s)^{1 +  \max\{n(\ell-1)(p-1), \frac{(\ell-\beta)p}{2}\}}(\ln(e+s))^{\frac{p}{2}}  ds\| u\|_{X(T)}^p\\  &+ &
   (1+t)^{\frac{n}{2}(\ell-1)}\int_0^t(1+s)^{1+(1-\ell)n\left( 1-\frac{1}{m}\right)+
    \frac{\ell-\beta}{2} p}(\ln(e+s))^{\frac{p}{2}}  ds\| u\|_{X(T)}^p\\
 &\lesssim &(1+t)^{\frac{n}{2}(\ell-1)}\| u\|_{X(T)}^p,\end{eqnarray*}
  for   $p> 1+ \frac{2}{n(1-\ell)}$, $m>\frac{2n}{n+2} $  and
  \[\beta> \ell+ n(1-\ell)+  \frac{2n(1-\ell)\ell}{2+ n(1-\ell)}.\]
Moreover, applying  again Proposition \ref{Lq} for $1 < \beta <  \ell+ n(1-\ell) + 2k(1-\ell)$,  we
have
\[
  \|  Fu(t, \cdot)\|_{\dot H^{k}}\lesssim \int_0^t (1+t)^{\frac{\ell-\beta}{2}} (1+s)^{1+\frac{\beta-\ell}{2}+(\ell-1) \left( \frac{n}{2}+k \right) } \left(\| |u(s, \cdot)|^p\|_{L^1}+ (1+s)^{(1-\ell)\left( \frac{n}{2}+k-1\right)}\| |u(s, \cdot)|^p\|_{\dot H^{k-1}}\right) ds.
\]
As before we may estimate
\begin{eqnarray*}
\| |u(s, \cdot)|^p\|_{L^1}& =&\| u(s, \cdot)\|_{L^{p}}^p\lesssim  \| u(s, \cdot)\|_{\dot H^{ k_1}}^p  \\
&\lesssim&  \|u\|_{X(T)}^p    \left\{ \begin{array}{lll}
(1+s)^{(\ell-1)n(p-1)} (\ln(e+s))^{\frac{p}2} \, \, \mbox{if $k_1 \leq \bar k$ } \\
(1+s)^{\frac{(\ell-\beta)p}{2}} \, \, \mbox{if $k_1 > \bar k$ }
\end{array}
\right.
\end{eqnarray*}
with $ k_1= n\left(\frac12-\frac1p\right) $,
then
$$
 \int_0^t (1+s)^{1+\frac{\beta-\ell}{2}+(\ell-1) \left( \frac{n}{2}+k \right) }  \| |u(s, \cdot)|^p\|_{L^1} \, ds \lesssim \| u\|_{X(T)}^p
$$
for   $p> 1+ \frac{2}{n(1-\ell)}$ and $\ell + \frac{4n(1-\ell)}{2+n(1-\ell) } < \beta <  \ell+ n(1-\ell) + 2k(1-\ell)$.\\
If  $p > \max\{1, k-1\}$ we may estimate
 \begin{eqnarray*}
 \| |u(s, \cdot)|^p\|_{{\dot H}^{k-1}}&\lesssim& \| u(s, \cdot)\|_{{\dot H}^{k-1}}\|u(s, \cdot)\|_{L^\infty}^{p-1}\\
&\lesssim&  \|u\|_{X(T)}^p  (1+s)^{\frac{(\ell-\beta)(p-1)}{2}}  \left\{ \begin{array}{lll}
(1+s)^{(\ell-1)\left( \frac{n}{2}+k-1\right)} (\ln(e+s))^{\frac{p}2} \, \, \mbox{if $k-1 \leq \bar k$ } \\
(1+s)^{\frac{\ell-\beta}{2}} \, \, \mbox{if $k-1 > \bar k$ }
\end{array}
\right.
 \end{eqnarray*}
The condition $k-1 \leq \bar k$ is equivalent to $\beta \geq  \ell+ n(1-\ell) + 2(k-1)(1-\ell)$.\\
For  $\ell > 1-\frac{2}{n}$, $k\geq 1+\frac{n\ell}{2}$ and $\beta \geq  \ell+ n(1-\ell) + 2(k-1)(1-\ell)$  we obtain
\[ \int_0^t  (1+s)^{1+\frac{\beta-\ell}{2}+(\ell-1) \left( \frac{n}{2}+k \right) } (1+s)^{(1-\ell)\left( \frac{n}{2}+k-1\right)} \| |u(s, \cdot)|^p\|_{\dot H^{k-1}} ds\lesssim \| u\|_{X(T)}^p\]
for   $p> 1+ \frac{2}{n(1-\ell)}$,
hence
\[
  \|  Fu(t, \cdot)\|_{\dot H^{k}}\lesssim (1+t)^{\frac{\ell-\beta}{2}} \| u\|_{X(T)}^p.
\]
Here we remark that
\[ \ell+ n(1-\ell) + 2(k-1)(1-\ell) \geq \ell + n(1-\ell) + \frac{2n(k(\ell-1)+1)(1-\ell)}{2-n(1-\ell)} \]
  for  all $k\geq 1+\frac{n\ell}{2}$.\\

\item The case $\bar k=k$, i.e., $\beta =  \ell+ n(1-\ell) + 2k(1-\ell)$:\\
In this case   one may conclude that
\[ \| Fu(t, \cdot)\|_{L^2}\lesssim(1+t)^{\frac{n}{2}(\ell-1)}\| u\|_{X(T)}^p,\]
   and
         \[
  \|  Fu(t, \cdot)\|_{\dot H^{\bar k}}\lesssim  (1+t)^{\frac{\ell-\beta}{2}}(\ln(e+t))^{\frac{1}2}\| u\|_{X(T)}^p
   \]
    for  $p> 1+ \frac{2}{n(1-\ell)}$.
     \end{itemize}

Finally, let us discuss the proof of \eqref{eq:contraction} only in the case
    $\beta> \ell+ n(1-\ell) + 2k(1-\ell)$.
Applying Proposition \ref{Lq}  we
have
\begin{eqnarray*}
\|  Fu(t, \cdot)-Fv(t, \cdot)\|_{L^2}&\lesssim & (1+t)^{(\ell-1)\frac{n}{2}}\int_0^t (1+s) \|(f(u)-f(v))(s, \cdot)\|_{L^1}ds\\
  &+& (1+t)^{(\ell-1)\frac{n}{2}} \int_0^t(1+s)^{1+(1-\ell)n\left( 1-\frac{1}{m}\right)}\|(f(u)-f(v))(s, \cdot)\|_{L^{m}} ds.
  \end{eqnarray*}
   Here,  we may take $m\in [1,2]$ such that $ m>\frac{2n}{n+2}$.\\
   By  using~\eqref{eq:f} and~H\"older inequality, we find that
\begin{equation}\label{eq:Holder}\begin{split}
& \|(f(u)-f(v))(s,\cdot)\|_{L^{\alpha}}\\
    & \qquad \leq C_1\,\|(u-v)(|u|^{p-1}+|v|^{p-1})(s,\cdot)\|_{L^{\alpha}} \\
    & \qquad \leq C_1\,\|(u-v)(s,\cdot)\|_{L^{p\alpha}}\,\big(\|u(s,\cdot)\|_{L^{p\alpha}}^{p-1}+\|v(s,\cdot)\|_{L^{p\alpha}}^{p-1}\big)\\
    & \qquad \leq C_2\,(1+s)^{n(\ell-1)\left(p-\frac1\alpha\right)}\,\|u-v\|_{X(T)}\,\big(\|u\|_{X(T)}^{p-1}+\|v\|_{X(T)}^{p-1}\big),
\end{split}\end{equation}
for any $1 \leq \alpha\leq m$. Therefore
\begin{eqnarray*}
\|  Fu(t, \cdot)-Fv(t, \cdot)\|_{L^2}&\lesssim & (1+t)^{(\ell-1)\frac{n}{2}}\int_0^t (1+s)^{1+n(\ell-1)\left(p-1\right)}ds\|u-v\|_{X(T)}\,\big(\|u\|_{X(T)}^{p-1}+\|v\|_{X(T)}^{p-1}\big)\\
  &+& (1+t)^{(\ell-1)\frac{n}{2}}  \|u-v\|_{X(T)}\,\big(\|u\|_{X(T)}^{p-1}+\|v\|_{X(T)}^{p-1}\big),
  \end{eqnarray*}
 for  $p> 1+ \frac{2}{n(1-\ell)}$.\\
 Applying again Proposition \ref{Lq} we have
  \begin{eqnarray*}
  \|  Fu(t, \cdot)-Fv(t, \cdot)\|_{\dot H^{k}}&\lesssim & (1+t)^{(\ell-1)\left(\frac{n}{2}+k\right)}\int_0^t (1+s) \| (f(u)-f(v))(s, \cdot)\|_{L^1}ds\\
  &+ &  (1+t)^{(\ell-1)\left(\frac{n}{2}+k\right)}\int_0^t(1+s)^{1+(1-\ell)\left( \frac{n}{2}+k-1\right)}\| (f(u)-f(v))(s, \cdot)\|_{\dot H^{k-1}} ds.
  \end{eqnarray*}
From now we assume that $f(u)=|u|^p$, without lose of generality.  In order to estimate $\|  (f(u)-f(v))(s, \cdot)\|_{\dot{H}^{k-1}}$ we use
\[  |u(s, x)|^{p}-|v(s, x)|^{p}= p\int_0^1 |v + \tau
(u-v)|^{p-2} (v + \tau (u-v))(s,x) d\tau (u-v)(s,x).
\]
Hence, applying  Proposition \ref{fractionalLeibniz}
 gives
\begin{eqnarray*}
\| |u(s, x)|^{p}-|v(s, x)|^{p}\|\|_{\dot{H}^{k-1}}&\lesssim & \|  (u-v)(s, \cdot)\|_{\dot{H}^{k-1}} \int_0^1 \| |v + \tau
(u-v)|^{p-2} (v + \tau (u-v))(s, \cdot)\|_{\infty} d\tau\\
&+&\|  (u-v)(s, \cdot)\|_{\infty} \int_0^1 \| |v + \tau
(u-v)|^{p-2} (v + \tau (u-v))(s, \cdot)\|_{\dot{H}^{k-1}} d\tau.
\end{eqnarray*}
 Now,  since $u,v\in X(T)$ we have
 $$
\|  (u-v)(s, \cdot)\|_{{\dot H}^{k-1}} \lesssim (1+s)^{(\ell-1)\left(\frac{n}{2}+k-1\right)}\|u-v\|_{X(T)},
$$
Applying Lemma \ref{lem:Sobolev}, for  $ \tilde k <\frac{n}{2}<k$ it follows
\[ \|  (u-v)(s, \cdot)\|_{\infty}\lesssim \|(u-v)(s, \cdot)\|_{\dot{H}^{\tilde k}} + \| (u-v)(s, \cdot)\|_{\dot{H}^{k}} \lesssim (1+s)^{(\ell-1)\left(\frac{n}{2}+\tilde k\right)}
\|u-v\|_{X(T)},\]
and
\[ \| |v + \tau
(u-v)|^{p-2} (v + \tau (u-v))(s, \cdot)\|_{\infty} \lesssim (1+s)^{(\ell-1)\left(\frac{n}{2}+\tilde k\right)(p-1)}\big(\|u\|_{X(T)}^{p-1}+\|v\|_{X(T)}^{p-1}\big),\]
  with $\tilde k=\frac{n}{2}-\epsilon_0$ and $\epsilon_0$ sufficiently small.\\
For $p > k$  Corollary \ref{corollaryfractionalpowers} implies
 \begin{eqnarray*}
&& \| |v + \tau
(u-v)|^{p-2} (v + \tau (u-v))(s, \cdot)\|_{\dot{H}^{k-1}}\leq C \| (v + \tau (u-v))(s, \cdot)\|_{{\dot H}^{k-1}}\|(v + \tau (u-v))(s, \cdot)\|_{L^\infty}^{p-2}\\
 &&\lesssim (1+s)^{(\ell-1)\left(\frac{n}{2}+k-1\right)}(1+s)^{(\ell-1)\left(\frac{n}{2}+\tilde k\right)(p-2)}\big(\|u\|_{X(T)}^{p-1}+\|v\|_{X(T)}^{p-1}\big).
\end{eqnarray*}
Therefore
  \begin{eqnarray*}
 && \|  Fu(t, \cdot)-Fv(t, \cdot)\|_{\dot H^{k}}\lesssim    (1+t)^{(\ell-1)\left(\frac{n}{2}+k\right)}\int_0^t (1+s)^{1+n(\ell-1)\left(p-1\right)}ds\|u-v\|_{X(T)}\,\big(\|u\|_{X(T)}^{p-1}+\|v\|_{X(T)}^{p-1}\big)\\
  &&+   (1+t)^{(\ell-1)\left(\frac{n}{2}+k\right)}\int_0^t(1+s)^{(\ell-1)\left(n-\epsilon_0\right)(p-1)} ds\|u-v\|_{X(T)}\,\big(\|u\|_{X(T)}^{p-1}+\|v\|_{X(T)}^{p-1}\big)\\
   &&\leq   (1+t)^{(\ell-1)\left(\frac{n}{2}+k\right)}\|u-v\|_{X(T)}\,\big(\|u\|_{X(T)}^{p-1}+\|v\|_{X(T)}^{p-1}\big),
  \end{eqnarray*}
 for  $p> 1+ \frac{2}{n(1-\ell)}$.

\end{proof}

\subsection{The proof of Theorem \ref{highdimensionssharp} }

By applying the change of variable
\[v(\tau, x)=u(t, x), \qquad 1+ \tau= \frac{(1+t)^{1-\ell}}{1-\ell},\]
the Cauchy problem (\ref{eq:DPE}) takes the form
\begin{equation}
\label{eq:DPE1}
\begin{cases}
 v_{\tau\tau}- \Delta v + \frac{\mu}{1+\tau} v_{\tau} =g(v), & \tau\geq s , \, x\in  \mathbf{R}^n, \,\\
 v(s, x)=0,  & x\in \mathbf{R}^n, \,\\
  v_{\tau}(s, x)=u_1(x),  & x\in \mathbf{R}^n,
\end{cases}
\end{equation}
with $ s=\frac{\ell}{1-\ell}$, $g(v)=[(1-\ell)(1+\tau)]^{\frac{2\ell}{1-\ell}}|v|^p$ and
\[ \mu=\frac{ \beta-\ell}{1-\ell}.\]

We now enunciate Corollary 2 from D'Abbicco's paper \cite{DA20}, which will be useful  in the proof of the Theorem 2.1.
There was introduced the following notation: For any $1\leq r\leq q\leq \infty$, let be
\[ d(r,q)=\left\{
\begin{array}{ll}
\frac{n}{r}-\frac{n-1}{2}-\frac1q, & \mbox{if  $r \leq q^{'}$} \\
 \frac{1}{r}+\frac{n-1}{2}-\frac{n}q, & \mbox{if $ r \geq q^{'}$}
\end{array}
\right.\]

\begin{corollary}(see \cite{DA20})
\label{Abbicco}
Let $ \mu \geq 2$. Let $n=2$ and $ 2 < q  \leq q_{\sharp}$, or $n=3$ and $q\in (1,4]$ or $n \geq 4$ and $\displaystyle{\frac{2(n-1)}{n+1} \leq q \leq q_{\sharp}}$.
Then there exists $r_2 \in (1,\min\{q, q^{\prime}\})$ such that $d(r_2,q)=1$ and the solution to (\ref{eq:DPE1})  verifies the following $(L^1 \cap L^2) - L^q$ decay estimate
$$
||  v(t, \cdot) ||_{L^q} \lesssim
(1+s)(1+\tau)^{-n\left(1-\frac{1}{q} \right)} \left ( || u_1||_{L^1} + (1+s)^{\frac{n-1}{2}- \frac{1}{q}} || u_1||_{L^{r_2}} \right )
$$
if $\displaystyle{\mu > n+1- \frac{2}{q}}$, and for any $\epsilon >0$ verifies the $(L^1 \cap L^2) - L^q$ estimate
$$
||  v(t, \cdot) ||_{L^q} \lesssim (1+s)^{\frac{\mu}{2}-\epsilon} (1+\tau)^{\epsilon-(n-1) \left ( \frac{1}{2}  -\frac{1}{q} \right ) - \frac{\mu}{2} }
 \left ( (1+s)^{\frac{1}{q}- \frac{n-1}{2}} || u_1||_{L^1}  + || u_1||_{L^{r_2}} \right )
$$
if $\displaystyle{\mu \leq n+1- \frac{2}{q}}$.
\end{corollary}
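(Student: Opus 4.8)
Since this corollary is quoted from D'Abbicco \cite{DA20}, the plan is to reconstruct its proof through the Fourier-analytic machinery already set up in this paper, specialized to the constant-speed problem \eqref{eq:DPE1} (which is precisely the case $\ell=0$ after the change of variables, with damping coefficient $\mu$ in place of $\beta$). First I would pass to the partial Fourier transform in $x$: for the linear evolution with $v(s)=0$, $v_\tau(s)=u_1$, the multiplier sending $\hat u_1$ to $\hat v(\tau)$ solves a Bessel-type ODE of the form \eqref{Bessel_eq} of order $\rho=(1-\mu)/2$, so that $\hat v$ is represented through the Hankel functions $H^\pm_\rho$ exactly as in \eqref{FSol}. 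The pointwise bounds collected in Lemma \ref{Lem1}, read with $\ell=0$, then control the size of this multiplier separately on the three frequency zones $Z_1,Z_2,Z_3$.

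The decay splits naturally along the zone decomposition, and so does the data norm: the $L^1$ piece will govern the low-frequency (diffusive) contribution while the $L^{r_2}$ piece governs the high-frequency (dispersive) one. In the low-frequency zone $Z_3$ the damping dominates and the multiplier behaves like the symbol of a heat semigroup; Hausdorff--Young together with Hölder (exactly the argument carried out in the proof of Proposition \ref{Lq}) produces the diffusive rate $(1+\tau)^{-n(1-1/q)}$ acting on $\|u_1\|_{L^1}$, with the $(1+s)$ prefactor. The delicate contribution is the high-frequency zone $Z_1$, where the hyperbolic structure survives and a purely $L^2$-based estimate is lossy off the duality line $1/r+1/q=1$; here one must bring in genuine dispersive $L^{r_2}\!-\!L^q$ bounds for the wave propagator, acting on $\|u_1\|_{L^{r_2}}$.

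To obtain those I would exploit the Liouville reduction $w=(1+\tau)^{\mu/2}v$, which turns the scale-invariant damped wave equation into a wave equation with a scale-invariant time-dependent mass, $w_{\tau\tau}-\Delta w+\tfrac{\mu(2-\mu)}{4(1+\tau)^2}w=0$; for $\mu=2$ this is literally the free wave equation, and for $\mu>2$ the mass is a lower-order scale-invariant perturbation. The classical $L^{r}\!-\!L^q$ dispersive estimates for the wave group (of Brenner--Pecher--Miyachi--Peral type) supply precisely the decay exponent $d(r,q)$ of the statement, and choosing $r_2\in(1,\min\{q,q'\})$ with $d(r_2,q)=1$ calibrates this dispersion against the $(1+\tau)^{-\mu/2}$ factor reintroduced by the transformation. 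This is why the threshold $\mu=n+1-2/q$ separates the two regimes: a direct computation shows that $n(1-1/q)=(n-1)(1/2-1/q)+\mu/2$ exactly at $\mu=n+1-2/q$, so when $\mu>n+1-2/q$ the diffusive rate wins and one recovers the clean heat-type estimate, whereas when $\mu\le n+1-2/q$ the wave-plus-damping rate $(n-1)(1/2-1/q)+\mu/2$ dominates, the arbitrarily small loss $\epsilon$ absorbing the borderline dyadic summation over frequencies that would otherwise produce a logarithm as in Proposition \ref{Lq}(ii). Gluing the three zones with matching powers of $(1+s)$ and $(1+\tau)$ then yields both stated estimates.

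The main obstacle is the sharp high-frequency off-line estimate. Unlike the $L^2$ bounds of Lemma \ref{Lem1}, the $L^{r_2}\!-\!L^q$ mapping property of the (time-dependent-mass) wave propagator is a genuine oscillatory-integral / Fourier-integral-operator statement, and it must be established uniformly in both the initial time $s$ and the current time $\tau$; one then has to track the correct powers of $(1+\tau)$ and $(1+s)$ through the transformation $w=(1+\tau)^{\mu/2}v$ and through the intermediate zone $Z_2$, where the competition between the diffusive and dispersive scales actually produces the threshold, so that the zone contributions can be summed without loss except in the borderline case.
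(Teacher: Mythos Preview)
The paper does not prove this corollary at all: it is quoted verbatim from \cite{DA20} (as the ``(see \cite{DA20})'' in the header and your own first sentence acknowledge), and the authors simply invoke it as a black box in the proof of Theorem~\ref{highdimensionssharp}. So there is no ``paper's own proof'' to compare your proposal against.

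That said, your reconstruction of how the result is obtained in \cite{DA20} is broadly on the right track: the zone decomposition, the heat-type behaviour at low frequencies giving the $L^1\!\to\!L^q$ diffusive rate $(1+\tau)^{-n(1-1/q)}$, the Liouville substitution $w=(1+\tau)^{\mu/2}v$ reducing to a wave equation with scale-invariant mass, and the identification of the threshold $\mu=n+1-2/q$ as the crossover between the diffusive and the wave-plus-damping decay rates are all correct ingredients. You also correctly flag the genuine difficulty, namely that the high-frequency $L^{r_2}\!-\!L^q$ bound is not a consequence of the crude Hausdorff--Young/H\"older argument in Proposition~\ref{Lq} but requires sharp oscillatory-integral dispersive estimates for the wave propagator; this is precisely the content that the present paper imports from \cite{DA20} rather than reproving. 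Your proposal is therefore an honest outline of the strategy, with the technical core (the uniform-in-$s,\tau$ off-line dispersive estimate and its compatibility with the zone gluing) left as the acknowledged obstacle rather than resolved.
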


\begin{proof}(Theorem \ref{highdimensionssharp})
  We  define the space
\[
\label{eq:Xsp} X(T)
     \doteq  C([0, \infty),
L^{p_c}(\mathbf{R}^n)\cap L^{q_\sharp}(\mathbf{R}^n)),
\]
equipped with the norm
\begin{eqnarray*}
\label{xnorm}
\|v\|_{X(T)}   & \doteq & \sup_{\tau\in[0,T]}  \Bigl\{ (1+\tau)^{n\left(1-\frac{1}{p_c}\right)}\| v(\tau,\cdot)\|_{L^{p_c}} \\
&+ & (1+\tau)^{n\left(1-\frac{1}{q_\sharp}\right)}\| v(\tau,\cdot)\|_{L^{q_{\sharp}}} \Bigr\}.
\end{eqnarray*}
for $   \mu > n+1-\frac{2}{q_\sharp} $  and,
\begin{eqnarray*}
\label{xnorm}
\|v\|_{X(T)}   & \doteq & \sup_{\tau\in[0,T]}  \Bigl\{ (1+\tau)^{n\left(1-\frac{1}{p_c}\right)}\| v(\tau,\cdot)\|_{L^{p_c}}+ (1+\tau)^{n\left(1-\frac{1}{\bar q}\right)}\| v(\tau,\cdot)\|_{L^{\bar q}} \\
&+ & (1+\tau)^{(n-1)\left(\frac12-\frac1{q_{\sharp}}\right)+\frac{ \mu}{2}-\epsilon}\| v(\tau,\cdot)\|_{L^{q_{\sharp}}} \Bigr\}.
\end{eqnarray*}
for $   n+1-\frac{2}{\bar q}<  \mu \leq n+1-\frac{2}{q_\sharp} $ and,
\begin{eqnarray*}
\label{xnorm}
\|v\|_{X(T)}   & \doteq & \sup_{\tau\in[0,T]}  \Bigl\{  (1+\tau)^{n\left(1-\frac{1}{p_c}\right)}\| v(\tau,\cdot)\|_{L^{p_c}}+(1+\tau)^{(n-1)\left(\frac12-\frac1{\bar q}\right)+\frac{ \mu}{2}-\epsilon}\| v(\tau,\cdot)\|_{L^{\bar q}} \\
&+ & (1+\tau)^{(n-1)\left(\frac12-\frac1{q_{\sharp}}\right)+\frac{ \mu}{2}-\epsilon}\| v(\tau,\cdot)\|_{L^{q_{\sharp}}} \Bigr\}.
\end{eqnarray*}
for $ \mu=  n+1-\frac{2}{\bar q}$.\\
In the following we only verify how to prove the global existence in time assuming that that there exists $\delta > 0$ such that
$$
u_1\in \mathcal{D} \doteq L^2(\mathbf{R}^n) \cap L^1(\mathbf{R}^n) ,  \qquad  || u_1||_\mathcal{D}\leq \delta.
$$
Thanks to Corollary \ref{Abbicco}, if $\beta\geq 2 $, then $v^{0} \in X(T)$ and it satisfies
%
\[
\|v^{0}\|_{X} \leq C\, \| u_1\|_{\mathcal{D}}.
\]
Let us prove (\ref{eq:well_th2}). Applying  Corollary \ref{Abbicco} we
have for $   \mu > n+1-\frac{2}{q_\sharp} $ and for all $p_c\leq q\leq q_\sharp$
\[
  \| Fv(\tau, \cdot)\|_{L^q}\lesssim \int_0^t (1+s)^{1+\frac{2\ell}{1-\ell}} (1+\tau)^{-n\left(1-\frac1q\right)} \left(\| |v(s, \cdot)|^p\|_{L^1}+ (1+s)^{\frac{n-1}{2}-\frac1q}\| |v(s, \cdot)|^p\|_{L^{r(q)}}\right) ds\]
 with $r(q) \in [1,2[$  given by $\frac{n}{r(q)}= \frac12+ \frac{n}{2}+ \frac{1}{q}$. Taking into
account that $u\in X(T)$,   thanks to $r(q_\sharp)p_c< q_\sharp$ for all $p_c\leq q\leq q_\sharp$,
  we may estimate for $p_c<p\leq  \frac{q_\sharp}{r(q_\sharp)}$
\begin{eqnarray*}
\| |v(s, \cdot)|^p\|_{L^r}& =&\| v(s, \cdot)\|_{L^{rp}}^p
 \lesssim(1+s)^{-n\left(1-\frac1{pr(q)}\right)p }\|v\|_{X(T)}^p\\
&\lesssim&(1+s)^{-n\left(p-\frac1{r(q)}\right)}
\|v\|_{X(T)}^p.
\end{eqnarray*}
 Therefore,  for  $   \mu > n+1-\frac{2}{q_\sharp} $ we have for all $p_c\leq q\leq q_\sharp$
  \begin{eqnarray*}
  \| Fv(t, \cdot)\|_{L^q}
 & \lesssim &(1+\tau)^{-n\left(1-\frac1q\right)}\int_0^{\tau} (1+s)^{1+\frac{2\ell}{1-\ell} - n(p-1)}  ds\| v\|_{X(T)}^p\\
  & + &(1+\tau)^{-n\left(1-\frac1q\right)}\int_0^{\tau} (1+s)^{1+\frac{2\ell}{1-\ell} - n\left(p-\frac1{r(q)}\right)} (1+s)^{\frac{n-1}{2}-\frac1q} ds\| v\|_{X(T)}^p\\
 &\lesssim &(1+\tau)^{-n\left(1-\frac1q\right)}\| v\|_{X(T)}^p,\end{eqnarray*}
  for    \[p>  1+\frac{2}{n(1-\ell)} .\]
For   $  n+1-\frac{2}{\bar q}< \mu \leq n+1-\frac{2}{q_\sharp}$,  applying again Corollary \ref{Abbicco}, for $p_c\leq q\leq \bar q$ we may estimate
 \[
  \| Fv(\tau, \cdot)\|_{L^{q}}\lesssim (1+\tau)^{-n\left(1-\frac1{q}\right)}\int_0^{\tau} (1+s)^{1+\frac{2\ell}{1-\ell}} \left( \| |v(s, \cdot)|^p\|_{L^1}+ (1+s)^{\frac{n-1}{2} -\frac1{q}}\| |v(s, \cdot)|^p\|_{L^{r(q)}}\right) ds\]
 with $\frac{n}{r(q)}= \frac12+ \frac{n}{2}+ \frac{1}{q}$.
 For $   n+1-\frac{2}{\bar q}<\mu \leq n+1-\frac{2}{q_\sharp} $, we may estimate
\begin{eqnarray*}
\| |v(s, \cdot)|^p\|_{L^1}& =&\| v(s, \cdot)\|_{L^{p}}^p\lesssim \| v(s, \cdot)\|_{L^{p_c}}^{(1-\theta)p} \| v(s, \cdot)\|_{L^{q_\sharp}}^{\theta p}  \\
&\lesssim&(1+s)^{-n\left(1-\frac1{p_c}\right)(1-\theta)p+\left(\epsilon -(n-1)\left(\frac12-\frac1{q_\sharp}\right)-\frac{\mu}{2}\right)p\theta }
\|v\|_{X(T)}^p\lesssim(1+s)^{-n\left(1-\frac1{p_c}\right)p}
\|v\|_{X(T)}^p,
\end{eqnarray*}
  thanks to
\[ \epsilon -(n-1)\left(\frac12-\frac1{q_\sharp}\right)-\frac{\mu}{2}+n\left(1-\frac1{p_c}\right)\leq  \epsilon+ (n-1)\left(\frac1{q_\sharp}-\frac1{p_c}\right)\leq 0,\]
for $\epsilon>0$ and $\theta=\left(\frac1{p_c}- \frac1p\right)/ \left(\frac1{p_c}-\frac1{q_\sharp}\right)$.\\
Thanks to    $r(q)p_c\leq r(\bar q)p_c\leq  \bar q$ for all $p_c\leq q \leq \bar q$ and, for $   n+1-\frac{2}{\bar q}<\mu \leq n+1-\frac{2}{q_\sharp} $ we may estimate
\begin{eqnarray*}
\| |v(s, \cdot)|^p\|_{L^{r(q)}}& =&\| v(s, \cdot)\|_{L^{r(q)p}}^p\lesssim \| v(s, \cdot)\|_{L^{r(q)p_c}}^{(1-\theta)p} \| v(s, \cdot)\|_{L^{r(\bar q)p}}^{\theta p}  \\
&\lesssim&(1+s)^{-n\left(1-\frac1{r(q)p_c}\right)(1-\theta)p+\left(\epsilon -(n-1)\left(\frac12-\frac1{r(\bar q)p}\right)-\frac{\mu}{2}\right)p\theta }
\|v\|_{X(T)}^p,
\end{eqnarray*}
with $\theta=\left(\frac1{r(q)p_c}- \frac1{r(q)p}\right)/ \left(\frac1{r(q)p_c}-\frac1{r(\bar q)p}\right)$.
Now, 
\begin{eqnarray*} \gamma&=&-n\left(1-\frac1{r(q)p_c}\right)p+\left(\epsilon -(n-1)\left(\frac12-\frac1{r(\bar q)p}\right)-\frac{\mu}{2}+n\left(1-\frac1{r(q)p_c}\right)\right)p\theta\\
&\leq& -np + \frac{n}{r(q)} + \left(\epsilon -\frac{(n-1)}{2} -\frac1{r(\bar q)p}-\frac{n+1}{2}+\frac1{\bar q}+n\right)p\theta\\
&\leq& -np+\frac{n}{r(q)}+ \left( \frac1{r(\bar q)p_c}-\frac1{r(\bar q)p}\right)p\theta+ \epsilon p\theta\\
&\leq& -np+\frac{n}{r(q)}+\left(\frac1{r(\bar q)}\left(\frac1{p_c}-\frac1{p}\right)\right)p + \epsilon p\theta \\
&\leq&   -np\left(1-\frac1{p_c}\right)-\frac{np}{p_c} +\frac{n}{r(q)}+\frac{p}{p_c}-1 + \epsilon p\theta.\\
&\leq&   -np\left(1-\frac1{p_c}\right)-n\left(1-\frac{1}{r(q)}\right) +(n-1)\left(1-\frac{p}{p_c}\right)  + \epsilon p\theta.
\end{eqnarray*}
        Therefore, for $   n+1-\frac{2}{\bar q}<\mu \leq n+1-\frac{2}{q_\sharp} $  and for $p_c \leq q \leq \bar q$ we conclude that
   \begin{eqnarray*}
  \| Fv(t, \cdot)\|_{L^{q}}
 & \lesssim &(1+\tau)^{-n\left(1-\frac1{q}\right)}\int_0^{\tau} (1+s)^{1+\frac{2\ell}{1-\ell} - n\left(1-\frac1{p_c}\right)p}  ds\| v\|_{X(T)}^p\\
  & + &(1+\tau)^{-n\left(1-\frac1{q}\right)}\int_0^{\tau} (1+s)^{1+\frac{2\ell}{1-\ell}+ \frac{n-1}{2} -\frac1{q} +\gamma}  ds\| v\|_{X(T)}^p\\
  & + &(1+\tau)^{-n\left(1-\frac1{q}\right)}\int_0^{\tau} (1+s)^{1+\frac{2\ell}{1-\ell}- n\left(1-\frac1{p_c}\right)p  + \epsilon p\theta}  ds\| v\|_{X(T)}^p\\
  &\lesssim &(1+\tau)^{-n\left(1-\frac1{q}\right)}\| v\|_{X(T)}^p,\end{eqnarray*}
  for    \[p>  \frac{2}{n(1-\ell)}\frac{p_c}{p_c-1}=1+\frac{2}{n(1-\ell)} .\]
   Now, for   $ \mu= n+1-\frac{2}{\bar q}$,  applying again  Corollary \ref{Abbicco}, we may estimate
$\| Fv(\tau, \cdot)\|_{L^{p_c}}$ as before, whereas for $q=\bar q$ or $q=q_\sharp$
\begin{eqnarray*}
  \| Fv(\tau, \cdot)\|_{L^{q}}&\lesssim& (1+\tau)^{\epsilon -n\left(1-\frac1{q}\right)-\frac{\mu}{2}}\int_0^{\tau} (1+s)^{\frac{2\ell}{1-\ell}+\frac{ \mu}{2}-\epsilon} \left( (1+s)^{-\frac{n-1}{2} +\frac1{q}}\| |v(s, \cdot)|^p\|_{L^1}+ \| |v(s, \cdot)|^p\|_{L^{r(q)}}\right) ds\\
  &\lesssim& (1+\tau)^{\epsilon -n\left(1-\frac1{q}\right)-\frac{\mu}{2}}\int_0^{\tau} (1+s)^{\frac{2\ell}{1-\ell}+1-\epsilon}\| |v(s, \cdot)|^p\|_{L^1}+(1+s)^{\frac{2\ell}{1-\ell}+\frac{ \mu}{2}-\epsilon} \| |v(s, \cdot)|^p\|_{L^{r(q)}} ds,
  \end{eqnarray*}
 for any $\epsilon>0$, with $\frac{n}{r(q)}= \frac12+ \frac{n}{2}+ \frac{1}{q}$.    \\
     Taking into
account that $u\in X(T)$, as before, we may estimate
 \[\| |v(s, \cdot)|^p\|_{L^1}\lesssim(1+s)^{-n\left(1-\frac1{p_c}\right)p}
\|v\|_{X(T)}^p\]
and thanks to $r(\bar q)p_c\leq r(q_\sharp)p_c < q_\sharp$(see Remark \ref{crucial}), we may estimate for  $p_c< p\leq \frac{q_\sharp}{r(q_\sharp)}$
 \begin{eqnarray*}
\| |v(s, \cdot)|^p\|_{L^{r(q)}}& =&\| v(s, \cdot)\|_{L^{pr(q)}}^p\\
&\lesssim &
 (1+s)^{\left(\epsilon -(n-1)\left(\frac12-\frac1{pr(q)}\right)-\frac{\mu}{2}\right)p }
\|v\|_{X(T)}^p
\end{eqnarray*}
 for any $\epsilon>0$.\\
  Now we may write
    \[\frac{2\ell}{1-\ell}+\frac{ \mu}{2}-\epsilon +\left(\epsilon -(n-1)\left(\frac12-\frac1{r(q)p}\right)-\frac{\mu}{2}\right)p=
    1+\frac{2\ell}{1-\ell}+\epsilon(p-1)+ n- \frac{1}{r(\bar q)} - \left(n-1+\mu \right)\frac{p}{2} + \gamma, \]
  with
$$
\gamma =   \frac{ \mu}{2}-1+
n\left(\frac{1}{r(q)}-1\right)+\frac{1}{r(\bar q)}- \frac{1}{r(q)}.
$$
For $ \mu= n+1-\frac{2}{\bar q}$ and $q=\bar q$ we have that $\gamma=0$, whereas for
 $q=q_\sharp$ we have
\begin{eqnarray*}
  \gamma &= &  \frac{ \mu}{2}-1+ n\left(\frac{1}{r(q_\sharp)}-1\right)+\frac{1}{r(\bar q)}- \frac{1}{r(q_\sharp)} \\
   &=& \frac{(n+1)}{2} - \frac{1}{\bar q} -1 + \frac{(1-n)}{2} +
   \frac{1}{q_\sharp} + \frac{1}{n}  \left (  \frac{1}{\bar q} -
\frac{1}{q_\sharp} \right) \\
&=& \left ( \frac{1}{n} - 1\right ) \left (  \frac{1}{\bar q} -
\frac{1}{q_\sharp} \right) < 0
\end{eqnarray*}
We conclude that for $ \mu= n+1-\frac{2}{\bar q}$ and $q=\bar q$ or
$q=q_\sharp$
  \begin{eqnarray*}
  \| Fv(\tau, \cdot)\|_{L^{q}}
 & \lesssim &
   (1+\tau)^{\epsilon -(n-1)\left(\frac12-\frac1{q}\right)-\frac{ \mu}{2}}\int_0^{\tau}
   (1+s)^{1+\frac{2\ell}{1-\ell}+\epsilon(p-1)+ n- \frac{1}{r(\bar q)} - \left(n-1+\mu \right)\frac{p}{2} + \gamma} ds\\
    & \lesssim &
   (1+\tau)^{\epsilon -(n-1)\left(\frac12-\frac1{q}\right)-\frac{ \mu}{2}}\| v\|_{X(T)}^p,
  \end{eqnarray*}
  for any $\epsilon>0$, $p>  p_c=1+\frac{2}{n(1-\ell)}$ and
$$
1+\frac{2\ell}{1-\ell}+\epsilon(p-1)+ n- \frac{1}{r(\bar q)} - \left(n-1+\mu
\right)\frac{p}{2} < -1
$$
i.e.
$$
(n-1+\mu) \frac{p_c}{2} \geq \frac{2}{1-\ell} + n- \frac{1}{r(\bar q)}
$$
is equivalent to
$$
\mu \geq n+1 - \frac{2}{p_cr(\bar q)}=n+1 - \frac{2}{\bar q}.
$$

Moreover, for   $  n+1-\frac{2}{\bar q}< \mu\leq n+1-\frac{2}{q_\sharp}$, we have
\begin{small}
  \begin{eqnarray*}
  \| Fv(\tau, \cdot)\|_{L^{q_\sharp}}&\lesssim& (1+\tau)^{\epsilon -(n-1)\!\left(\frac12-\frac1{q_\sharp}\right)-\frac{ \mu}{2}}\int_0^{\tau} (1+s)^{\frac{2\ell}{1-\ell}+\frac{ \mu}{2}-\epsilon} \left( (1+s)^{-\frac{n-1}{2} +\frac1{q_\sharp}}\| |v(s, \cdot)|^p\|_{L^1}+ \| |v(s, \cdot)|^p\|_{L^{r(q_\sharp)}}\!\right) ds\\
  &\lesssim& (1+\tau)^{\epsilon -(n-1)\left(\frac12-\frac1{q_\sharp}\right)-\frac{ \mu}{2}}\int_0^{\tau} (1+s)^{\frac{2\ell}{1-\ell}+1-\epsilon} \| |v(s, \cdot)|^p\|_{L^1}+ (1+s)^{\frac{2\ell}{1-\ell}+\frac{ \mu}{2}-\epsilon}\| |v(s, \cdot)|^p\|_{L^{r(q_\sharp)}} ds
  \end{eqnarray*}
  \end{small}
 for any $\epsilon>0$, with $\frac{n}{r(q_\sharp)}= \frac12+ \frac{n}{2}+ \frac{1}{q_\sharp}$.\\
 If  $  n+1-\frac{2}{p_cr(q_\sharp)}< \mu\leq n+1-\frac{2}{q_\sharp}$
 we may estimate
 \begin{eqnarray*}
\| |v(s, \cdot)|^p\|_{L^{r(q_\sharp)}}\lesssim(1+s)^{\left(\epsilon -(n-1)\left(\frac12-\frac1{pr(q_\sharp)}\right)-\frac{\mu}{2}\right)p }
\|v\|_{X(T)}^p,
\end{eqnarray*}
hence
\begin{eqnarray*}
(1+s)^{\frac{2\ell}{1-\ell}+\frac{ \mu}{2}-\epsilon}\| |v(s, \cdot)|^p\|_{L^{r(q_\sharp)}}&\leq &(1+s)^{\frac{2\ell}{1-\ell}+ \epsilon(p-1)
-\frac{ \mu(p-1)}{2} -(n-1)\left(\frac12-\frac1{pr(q_\sharp)}\right)p}\\
&\leq &  (1+s)^{\frac{2\ell}{1-\ell} -n(p-1) +1+ \epsilon(p-1)+\gamma} \leq (1+s)^{-1}
\end{eqnarray*}
for   $\epsilon>0$ sufficiently small and
\[p_c< p\leq \left(\frac1{r(q_\sharp)}-\frac1{q_\sharp} \right)p_cr(q_\sharp)=p_c+1-\frac{p_cr(q_\sharp)}{q_\sharp}\]
thanks to
\[\gamma=\frac{p-1}{p_cr(q_\sharp)}+\frac1{q_\sharp}-\frac1{r(q_\sharp)}\leq 0.\]
Finally,  if  $  n+1-\frac{2}{\bar q}< \mu\leq n+1-\frac{2}{p_cr(q_\sharp)}$ we may estimate for $p_c< p\leq \frac{q_\sharp}{r(q_\sharp)}$
 \begin{eqnarray*}
\| |v(s, \cdot)|^p\|_{L^{r(q_\sharp)}}& =&\| v(s, \cdot)\|_{L^{pr(q_\sharp)}}^p\\
&\lesssim &
 (1+s)^{\left(\epsilon -(n-1)\left(\frac12-\frac1{pr(q_\sharp)}\right)-\frac{\mu}{2}\right)p }
\|v\|_{X(T)}^p
\end{eqnarray*}
 for any $\epsilon>0$.\\
  Now we may write
    \[\frac{2\ell}{1-\ell}+\frac{ \mu}{2}-\epsilon +\left(\epsilon -(n-1)\left(\frac12-\frac1{r(q_\sharp)p}\right)-\frac{\mu}{2}\right)p=
    1+\frac{2\ell}{1-\ell}+\epsilon(p-1)+ n- \frac{1}{r(\bar q)} - \left(n-1+\mu \right)\frac{p}{2} + \gamma, \]
  with
$$
\gamma =   \frac{ \mu}{2}-1+
n\left(\frac{1}{r(q_\sharp)}-1\right)+\frac{1}{r(\bar q)}- \frac{1}{r(q_\sharp)}.
$$
It remains to prove that $\gamma\leq 0$. Indeed,  for $ \mu\leq n+1-\frac{2}{p_cr(q_\sharp)}$ and for $\frac{1}{q_{\sharp}}\leq \frac{1}{n-1} \left (  \frac{n}{p_cr(q_\sharp)} - \frac{1}{\bar q} \right )$ (see Remark \ref{aboutqbar}) we have
\begin{eqnarray*}
  \gamma &= &  \frac{ \mu}{2}-1+ n\left(\frac{1}{r(q_{\sharp})}-1\right)+\frac{1}{r(\bar q)}- \frac{1}{r(q_{\sharp})} \\
&= &  \frac{ \mu}{2}-1+  \frac{1-n}{2}+\frac{1}{q_{\sharp}} +     \frac{1}{n} \left ( \frac{1}{\bar q}- \frac{1}{q_{\sharp}} \right )  \\
& \leq &  \frac{ \mu}{2}-  \frac{1+n}{2}+ \frac{1}{p_cr(q_\sharp)}  \leq 0
\end{eqnarray*}
Therefore,  if  $  n+1-\frac{2}{\bar q}< \mu\leq n+1-\frac{2}{q_\sharp}$ we have proved that
\[\| Fv(s, \cdot)\|_{L^{q_\sharp}}\lesssim(1+\tau)^{\epsilon -(n-1)\left(\frac12-\frac1{q_\sharp}\right)-\frac{ \mu}{2}}\| v\|_{X(T)}^p,\]
 for any $\epsilon>0$ sufficiently small, $p>  p_c=1+\frac{2}{n(1-\ell)}$.
\end{proof}

%

%

\section*{Appendix}
\label{SecAppendix}
%
%
In the Appendix we list some notations used through the paper and
results of Harmonic Analysis which are important tools for proving
results on the global existence of small data solutions for
semi-linear  models with power non-linearities. Through this paper,
we use the following.\\
For~$s\geq0$, we denote by~$|D|^{s}f=\mathcal F^{-1}(|\xi|^{s}\hat f)$ and $\langle D \rangle^{s}f=\mathcal F^{-1}(\langle \xi \rangle^{s}\hat f)$, with
$\langle \xi \rangle^s= (1+ |\xi|^2)^{\frac{s}{2}}$.

For any $q\in[1,\infty]$, we denote by $L^q(\mathbf{R}^n)$ the usual
Lebesgue space over $\mathbf{R}^n$.
Let $s\in \mathbb{R}$ and $1<p<\infty$. Then
\begin{align*}
H^{s, p}(\mathbb{R}^n)&=\{u\in\mathcal{S}'(\mathbb{R}^n): \|\langle D \rangle^s u\|_{L^p(\mathbb{R}^n)}=\| u\|_{H^s_p(\mathbb{R}^n)}<\infty \},\\
\dot{H}^{s,p}(\mathbb{R}^n)&=\{u\in\mathcal{Z}'(\mathbb{R}^n): \||D|^s u\|_{L^p(\mathbb{R}^n)}=\| u\|_{\dot{H}^s_p(\mathbb{R}^n)}<\infty \}
\end{align*}
are called Bessel and Riesz potential spaces, respectively. If $p=2$, then we use the notations $H^s(\mathbb{R}^n)$ and
 $\dot{H}^s(\mathbb{R}^n)$, respectively. In the definition of the Riesz potential spaces we use the space of distributions $\mathcal{Z}'(\mathbb{R}^n)$.
This space of distributions can be identified with the factor space $\mathcal{S}'/\mathcal{P}$, where $\mathcal{S}'$ denotes the dual of Schwartz space and $\mathcal{P}$ denotes the set of all polynomials.\\
We recall that~$H^{s,q}(\mathbf{R}^n)=W^{s,q}(\mathbf{R}^n)$, the
usual Sobolev space, for any~$q\in(1,\infty)$ and~$s\in \mathbf{N}$.\\
The following inequality can be found in \cite{Friedman}, Part 1,
Theorem 9.3.
\begin{proposition}[Fractional Gagliardo-Nirenberg inequality] \label{GagliardoNirenberginequality}
Let $1<p,p_0,p_1<\infty$, $\sigma >0$ and $s\in [0,\sigma)$. Then it holds the following fractional Gagliardo-Nirenberg inequality for all $u\in L^{p_0}(\mathbb{R}^n)\cap \dot{H}^{\sigma, p_1}(\mathbb{R}^n)$:
\[
\|u\|_{\dot{H}^{s, p}}\lesssim \|u\|_{L^{p_0}}^{1-\theta}\|u\|_{\dot{H}^{\sigma, p_1}}^\theta,
\]
where $\theta=\theta_{s,\sigma}(p,p_0,p_1)=\frac{\frac{1}{p_0}-\frac{1}{p}+\frac{s}{n}}{\frac{1}{p_0}-\frac{1}{p_1}+\frac{\sigma}{n}}$ and $\frac{s}{\sigma}\leq \theta\leq 1$ .
\end{proposition}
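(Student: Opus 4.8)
The plan is to deduce the inequality by chaining two classical facts, so that essentially all the work lies in matching the parameters. First I would peel off the degenerate cases $\theta\in\{0,1\}$: the relation defining $\theta$ shows that $\theta=0$ forces $\tfrac1{p_0}=\tfrac1p$ and $s=0$, reducing the claim to the trivial $\|u\|_{L^p}\le\|u\|_{L^{p_0}}$, while $\theta=1$ forces $\tfrac1{p_1}-\tfrac1p=\tfrac{\sigma-s}{n}\ge0$, reducing it to the pure fractional Sobolev embedding $\dot H^{\sigma,p_1}(\mathbb R^n)\hookrightarrow\dot H^{s,p}(\mathbb R^n)$. So assume $\tfrac s\sigma\le\theta<1$ and set $\tfrac1r\doteq\tfrac{1-\theta}{p_0}+\tfrac\theta{p_1}$; since $\tfrac1{p_0},\tfrac1{p_1}\in(0,1)$ and $\theta\in[0,1)$ one has $r\in(1,\infty)$.

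\emph{Step 1 (interpolation).} For $1<p_0,p_1<\infty$ the complex interpolation identity for Riesz potential spaces (see e.g.\ Bergh--L\"ofstr\"om or Triebel) gives $\big[L^{p_0}(\mathbb R^n),\dot H^{\sigma,p_1}(\mathbb R^n)\big]_\theta=\dot H^{\theta\sigma,\,r}(\mathbb R^n)$, and the three-lines estimate underlying the complex method yields, for all $u\in L^{p_0}\cap\dot H^{\sigma,p_1}$,
\[
\|u\|_{\dot H^{\theta\sigma,\,r}}\lesssim\|u\|_{L^{p_0}}^{1-\theta}\,\|u\|_{\dot H^{\sigma,p_1}}^{\theta}.
\]

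\emph{Step 2 (fractional Sobolev embedding).} It remains to check that $\dot H^{\theta\sigma,\,r}(\mathbb R^n)\hookrightarrow\dot H^{s,p}(\mathbb R^n)$. By the Hardy--Littlewood--Sobolev inequality, i.e.\ boundedness of the Riesz potential $|D|^{-(\theta\sigma-s)}\colon L^r\to L^p$, this holds provided $\theta\sigma\ge s$, $r\le p$, and the dimensional balance $\theta\sigma-\tfrac nr=s-\tfrac np$. The first is exactly the hypothesis $\theta\ge s/\sigma$. For the balance, multiply the defining relation $\theta\big(\tfrac1{p_0}-\tfrac1{p_1}+\tfrac\sigma n\big)=\tfrac1{p_0}-\tfrac1p+\tfrac sn$ by $n$ and rearrange to get $\theta\sigma-n\big(\tfrac{1-\theta}{p_0}+\tfrac\theta{p_1}\big)=s-\tfrac np$, that is $\theta\sigma-\tfrac nr=s-\tfrac np$; and then $\tfrac nr-\tfrac np=\theta\sigma-s\ge0$ forces $r\le p$. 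Composing the two estimates gives the asserted inequality.

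I expect the only genuinely delicate point to be this parameter bookkeeping --- verifying that the interpolated smoothness $\theta\sigma$ lies in $[s,\sigma]$ and that $r$ stays strictly inside $(1,p]$ in every admissible configuration, so that both the interpolation identity and the embedding remain valid away from their endpoints $r\in\{1,\infty\}$ and $p=\infty$ (the latter excluded by hypothesis). A more self-contained but messier alternative avoids these two black boxes: using the Littlewood--Paley square-function characterization of $\dot H^{s,p}$ (available since $1<p<\infty$), bound $\|u\|_{\dot H^{s,p}}$ by $\sum_{j\in\mathbb Z}2^{js}\|\Delta_ju\|_{L^p}$, estimate each dyadic block at frequency $2^j$ by Bernstein's inequality on the annulus $|\xi|\sim2^j$, both as $2^{jn(1/p_0-1/p)}\|u\|_{L^{p_0}}$ and as $2^{-j(\sigma-n(1/p_1-1/p))}\|u\|_{\dot H^{\sigma,p_1}}$, split the $j$-sum at the frequency where the two bounds coincide, and sum the resulting geometric series --- the hypothesis on $\theta$ being precisely what makes the ratio of that series less than $1$. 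The obstacle in this route is that when $p$ is smaller than $p_0$ or than $p_1$, Bernstein no longer controls a single block term by term and one has to re-run the splitting with square functions in place of naive block norms, which is where $1<p,p_0,p_1<\infty$ is genuinely used.
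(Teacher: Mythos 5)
The paper does not prove this proposition at all: it is quoted as a known result with a pointer to Friedman, Part 1, Theorem 9.3, and is then used in Section 5 as a black box. So there is no proof in the paper to compare against; judged on its own, your derivation is correct. The parameter bookkeeping checks out: multiplying the defining relation for $\theta$ by $n$ does give $\theta\sigma-\tfrac nr=s-\tfrac np$ with $\tfrac1r=\tfrac{1-\theta}{p_0}+\tfrac{\theta}{p_1}$, the hypothesis $\theta\ge s/\sigma$ is exactly what makes the second step an honest Sobolev embedding (a derivative loss $\theta\sigma-s\ge0$ traded for the integrability gain $r\le p$, with the case $r=p$, $\theta\sigma=s$ degenerating to the identity), and the endpooints $\theta\in\{0,1\}$ are correctly disposed of. The two facts you import --- the complex-interpolation identity $[L^{p_0},\dot H^{\sigma,p_1}]_\theta=\dot H^{\theta\sigma,r}$ together with the multiplicative three-lines estimate on the intersection, and Hardy--Littlewood--Sobolev for $1<r<p<\infty$ --- are both standard; the only caveat worth flagging is that the homogeneous interpolation identity needs the realization of $\dot H^{\sigma,p_1}$ in $\mathcal{Z}'$ (modulo polynomials), which is consistent with the definition the paper adopts in its Appendix. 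Your Littlewood--Paley alternative is essentially the proof found in the literature for the full range of exponents, and you correctly locate the one genuinely delicate point, namely that Bernstein only runs in the direction of increasing integrability, which is where $1<p,p_0,p_1<\infty$ and the square-function characterization are actually used. In short: a correct, self-contained argument for a statement the paper merely cites.
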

We present here a result for fractional powers  \cite{Sickel}.
\begin{proposition} \label{fractionalpowers} Let $p>1$, $f(u)=|u|^p$ or $f(u)=|u|^{p-1}u$ and $u \in H^{s,m}$, where $s \in \big(\dfrac{n}{m},p\big)$, $1<m<\infty$. Then the
following estimate holds$:$
\[\Vert f(u)\Vert_{H^{s,m}}\le C \Vert u\Vert_{H^{s,m}}\|u\|_{L^\infty}^{p-1}.\]
\end{proposition}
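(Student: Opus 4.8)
To prove the estimate I would first invoke the Sobolev embedding $H^{s,m}(\mathbb{R}^n)\hookrightarrow L^\infty(\mathbb{R}^n)$, which holds since $s>n/m$, so that $u\in L^\infty$ with $\|u\|_{L^\infty}\lesssim\|u\|_{H^{s,m}}$, together with the standard norm equivalence $\|v\|_{H^{s,m}}\sim\|v\|_{L^m}+\||D|^s v\|_{L^m}$ valid for $s>0$, $1<m<\infty$; this reduces the claim to bounding the two pieces separately. The low-order piece is immediate from H\"older's inequality, since $|f(u)|=|u|^p$ and hence
\[
\|f(u)\|_{L^m}=\|u\|_{L^{pm}}^{p}\le\|u\|_{L^\infty}^{p-1}\|u\|_{L^m}\lesssim\|u\|_{L^\infty}^{p-1}\|u\|_{H^{s,m}}.
\]

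For the main term $\||D|^s f(u)\|_{L^m}$ I would split according to whether $s<1$ or $s\ge1$. If $0<s<1$, the Christ--Weinstein fractional chain rule applies because $f\in C^1$ with $f'$ H\"older continuous of exponent $\min\{1,p-1\}$, and yields $\||D|^s f(u)\|_{L^m}\lesssim\|f'(u)\|_{L^\infty}\||D|^s u\|_{L^m}=p\,\|u\|_{L^\infty}^{p-1}\||D|^s u\|_{L^m}$. If $s\ge1$, write $s=N+\sigma$ with $N\in\mathbb{N}$ and $0\le\sigma<1$; the hypothesis $s<p$ forces $N<p$, so $f\in C^{N}$ with $|f^{(k)}(v)|\lesssim|v|^{p-k}$ for $1\le k\le N$. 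Using the Mikhlin multiplier theorem one has $\||D|^s f(u)\|_{L^m}\sim\sum_{|\beta|=N}\||D|^\sigma\partial^\beta f(u)\|_{L^m}$, and each $\partial^\beta f(u)$ (now a genuine differential operator applied to $f(u)$) expands by the higher-order chain rule (Fa\`{a} di Bruno's formula) into a finite sum of terms $f^{(k)}(u)\,\partial^{\alpha_1}u\cdots\partial^{\alpha_k}u$ with $|\alpha_1|+\dots+|\alpha_k|=N$ and $|\alpha_i|\ge1$. Distributing $|D|^\sigma$ over such a product by the fractional Leibniz rule, bounding the factors $f^{(k)}(u)$ in $L^\infty$ by $\|u\|_{L^\infty}^{p-k}$ (and, where $|D|^\sigma$ lands on that factor, by the fractional chain rule for $f^{(k)}$), and interpolating every differentiated factor of $u$ by the fractional Gagliardo--Nirenberg inequality (Proposition~\ref{GagliardoNirenberginequality}) between $L^\infty$ and $\dot H^{s,m}$, with the interpolation parameters chosen so that the powers of $\|u\|_{L^\infty}$ add up to $p-1$ and the differentiation orders add up to $s$, each term is controlled by $\|u\|_{L^\infty}^{p-1}\||D|^s u\|_{L^m}$. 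Summing the finitely many terms and combining with the low-order piece gives $\|f(u)\|_{H^{s,m}}\lesssim\|u\|_{L^\infty}^{p-1}\|u\|_{H^{s,m}}$.

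The main obstacle is the borderline regularity when $p$ is not an integer: the top-order derivative $f^{(N)}$ is then only H\"older continuous of order $p-N\in(0,1]$, so whenever $\sigma$ together with that factor would call for more than $p$ total derivatives of $f$ the naive chain-rule step is not licit and has to be replaced by its H\"older (paraproduct) version; one must also verify that all Lebesgue exponents produced by the Gagliardo--Nirenberg interpolations lie in $(1,\infty)$, which is precisely where the hypothesis $s>n/m$ (forcing the top space into $L^\infty$ and thus leaving room to choose those exponents) is used. A way to handle all cases uniformly — and the route I would ultimately follow — is a Littlewood--Paley/Bony decomposition $f(u)=f(S_1 u)+\sum_{j\ge1}\bigl(f(S_{j+1}u)-f(S_j u)\bigr)$, writing each difference as $(\Delta_j u)\int_0^1 f'\bigl(S_j u+t\,\Delta_j u\bigr)\,dt$, using that the Littlewood--Paley projections are uniformly bounded on $L^\infty$ so the integral factor is $O\bigl(\|u\|_{L^\infty}^{p-1}\bigr)$ pointwise, and then reassembling the frequency-localised pieces via the square-function characterisation of $H^{s,m}=F^{s}_{m,2}$, as carried out in \cite{Sickel}.
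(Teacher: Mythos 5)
The paper does not prove this proposition at all: it is imported verbatim from Runst--Sickel \cite{Sickel}, so there is no in-paper argument to compare yours against. Judged on its own, your outline is the standard Moser-type composition estimate and is essentially sound: the reduction via $H^{s,m}\hookrightarrow L^\infty$ and $\|v\|_{H^{s,m}}\sim\|v\|_{L^m}+\||D|^sv\|_{L^m}$, the H\"older bound for the $L^m$ piece, the Christ--Weinstein chain rule for $s<1$, and the Fa\`a di Bruno plus fractional-Leibniz plus interpolation scheme for $s\geq 1$ are all the right moves, and you correctly identify that the hypothesis $s<p$ is exactly what keeps the total number of derivatives demanded of $f$ below its available H\"older regularity. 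Two technical points deserve flagging. First, Proposition~\ref{GagliardoNirenberginequality} as stated in the paper requires $1<p_0<\infty$, so the interpolation of $\partial^{\alpha_i}u$ between $L^\infty$ and $\dot H^{s,m}$ that you invoke is not literally covered by it; you need the $L^\infty$-endpoint version of Gagliardo--Nirenberg (true, but a separate statement). Second, in your final Littlewood--Paley route the pieces $(\Delta_j u)\int_0^1 f'(S_ju+t\Delta_ju)\,dt$ are not frequency-localised, because the integral factor carries high frequencies; before the square-function characterisation of $F^s_{m,2}$ can be applied one must further decompose that factor (paraproduct analysis, with control of $\|\Delta_k G_j\|_{L^\infty}$ for $k>j$), and it is precisely there that $s<p$ enters again. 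Both gaps are fixable and are handled in \cite{Sickel}, which is indeed the decomposition the cited proof uses, so your concluding strategy matches the actual source even though the paper itself gives no proof.
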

In  \cite{EbertReissig}  the following corollary was derived:
\begin{corollary} \label{corollaryfractionalpowers}
Let $f(u)=|u|^p$ or $f(u)=|u|^{p-1}u$, with $p>\max\{1, s\}$ and $u
\in H^{s,m}\cap L^\infty$, $1<m<\infty$. Then the following estimate
holds$:$
\[\Vert f(u)\Vert_{\dot{H}^{s,m}}\le C \Vert u\Vert_{\dot{H}^{s,m}}\|u\|_{L^\infty}^{p-1}.\]
\end{corollary}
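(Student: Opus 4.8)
The plan is to deduce the homogeneous bound of Corollary~\ref{corollaryfractionalpowers} from the inhomogeneous one of Proposition~\ref{fractionalpowers} in the range of large smoothness indices, $n/m<s<p$, by a dilation argument, and to treat the complementary regime $0<s\le n/m$ — which is in fact the one used in the proof of Theorem~\ref{highdimensions} — by a difference characterisation of $\dot H^{s,m}$.

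First I would recall that for $s>0$ and $1<m<\infty$ one has $\|g\|_{H^{s,m}}\approx\|g\|_{L^m}+\|g\|_{\dot H^{s,m}}$, by a standard Fourier-multiplier argument splitting $g$ into low and high frequencies. Hence, for $n/m<s<p$, Proposition~\ref{fractionalpowers} already gives $\|f(u)\|_{\dot H^{s,m}}\lesssim(\|u\|_{L^m}+\|u\|_{\dot H^{s,m}})\|u\|_{L^\infty}^{p-1}$, and it only remains to discard the $\|u\|_{L^m}$ term. For this I would apply the last inequality to the dilate $u_\lambda:=u(\lambda\,\cdot)$, $\lambda>0$, which again lies in $H^{s,m}\cap L^\infty$; since $f(u_\lambda)(x)=f(u)(\lambda x)$, $|D|^s(u_\lambda)(x)=\lambda^s(|D|^su)(\lambda x)$ and $\|u_\lambda\|_{L^\infty}=\|u\|_{L^\infty}$, one gets $\|u_\lambda\|_{\dot H^{s,m}}=\lambda^{s-\frac nm}\|u\|_{\dot H^{s,m}}$, $\|f(u_\lambda)\|_{\dot H^{s,m}}=\lambda^{s-\frac nm}\|f(u)\|_{\dot H^{s,m}}$ and $\|u_\lambda\|_{L^m}=\lambda^{-\frac nm}\|u\|_{L^m}$, so after dividing by $\lambda^{s-n/m}$ one is left with $\|f(u)\|_{\dot H^{s,m}}\lesssim(\lambda^{-s}\|u\|_{L^m}+\|u\|_{\dot H^{s,m}})\|u\|_{L^\infty}^{p-1}$; letting $\lambda\to\infty$ (here $s>0$ is essential) proves the claim in this range.

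For $0<s<1$ I would argue directly, bypassing Proposition~\ref{fractionalpowers}, through the characterisation $\|g\|_{\dot H^{s,m}}\approx\|S_sg\|_{L^m}$, where $S_sg(x):=\big(\int_{\mathbf{R}^n}|g(x+h)-g(x)|^2\,|h|^{-n-2s}\,dh\big)^{1/2}$, valid for $1<m<\infty$. Since $p>1$, $f$ is Lipschitz on $[-\|u\|_{L^\infty},\|u\|_{L^\infty}]$ with constant $\lesssim\|u\|_{L^\infty}^{p-1}$, so $|f(u)(x+h)-f(u)(x)|\lesssim\|u\|_{L^\infty}^{p-1}|u(x+h)-u(x)|$ pointwise, hence $S_s(f(u))\lesssim\|u\|_{L^\infty}^{p-1}S_su$ pointwise, and taking $L^m$ norms closes the estimate. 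For $1\le s\le n/m$ (which can already occur for $n=3,4$ in Theorem~\ref{highdimensions}) I would peel off integer derivatives: writing $\||D|^sf(u)\|_{L^m}\approx\||D|^{s-1}(f'(u)\nabla u)\|_{L^m}$ and distributing the $s-1$ derivatives by the fractional Leibniz rule (Proposition~\ref{fractionalLeibniz}), the ``main'' term $\|f'(u)\|_{L^\infty}\||D|^{s-1}\nabla u\|_{L^m}$ is $\lesssim\|u\|_{L^\infty}^{p-1}\|u\|_{\dot H^{s,m}}$, while the remaining terms $\||D|^{s-1}f'(u)\|_{L^{r_1}}\|\nabla u\|_{L^{r_2}}$, $\tfrac1{r_1}+\tfrac1{r_2}=\tfrac1m$, are controlled by applying the already proved estimate to $f'(u)$ (of the form $c|u|^{p-1}\mathrm{sgn}(u)^{j}$, with smoothness index $s-1<p-1$) and by interpolating $\|u\|_{\dot H^{1,r_2}}$ between $\|u\|_{L^\infty}$ and $\|u\|_{\dot H^{s,m}}$ via Proposition~\ref{GagliardoNirenberginequality}; one iterates finitely many times down to exponents in $(0,1)$.

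The main obstacle is precisely this low-regularity regime $s\le n/m$, where Proposition~\ref{fractionalpowers} is unavailable: one must verify that all intermediate exponents $r_1,r_2$ and all Gagliardo--Nirenberg parameters produced by the iteration stay admissible, so that the total power of $\|u\|_{\dot H^{s,m}}$ is exactly $1$ and that of $\|u\|_{L^\infty}$ exactly $p-1$; and — when $p$ is close to $1$, so that $f'$ is merely Hölder of order $p-1$ — that the difference bound above is applied with an exponent $\theta\in(s-1,p-1]$ in place of $\theta=1$, which is where the hypothesis $p>\max\{1,s\}$ enters. The dilation reduction and the range $0<s<1$, by contrast, are routine.
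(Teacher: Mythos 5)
The paper does not prove Corollary \ref{corollaryfractionalpowers} at all: it is quoted from \cite{EbertReissig}, where it is deduced from Proposition \ref{fractionalpowers} by exactly the dilation argument of your first paragraph (apply the inhomogeneous estimate to $u(\lambda\cdot)$, divide by $\lambda^{s-n/m}$, let $\lambda\to\infty$). So for $s\in(n/m,p)$ your proof is correct and coincides with the intended derivation. Your observation that the paper actually invokes the corollary with $s=k-1=\frac{n\ell}{2}<\frac{n}{2}=\frac{n}{m}$, i.e.\ outside the range of Proposition \ref{fractionalpowers}, is accurate and worth recording. Your argument for $0<s<1$ via the square function $S_s$ is also essentially correct and is a genuinely different (and more elementary) route; the only caveat is that the equivalence $\|g\|_{\dot{H}^{s,m}}\approx\|S_sg\|_{L^m}$ is valid for $m>\frac{2n}{n+2s}$, not for all $1<m<\infty$, which is harmless in the paper's application ($m=2$) but should be stated.

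The genuine gap is the regime $1\le s\le n/m$. First, your inductive step sends $\||D|^{s-1}f'(u)\|_{L^{r_1}}$ back to the corollary for the exponent $p-1$; when $s<p\le 2$ one has $p-1\le 1$, so $f'$ is only H\"older continuous of order $p-1$, the inductive hypothesis $p-1>\max\{1,s-1\}$ fails, and the difference bound produces $|u(x+h)-u(x)|^{p-1}$ rather than a first power. Converting that into a single factor of $\|u\|_{\dot{H}^{s-1,r_1}}$ is precisely the content of the Christ--Weinstein/Taylor fractional chain rule and does not follow from the Lipschitz argument you used for $0<s<1$; you flag this, but flagging it is not closing it. Second, Proposition \ref{GagliardoNirenberginequality} is stated for $1<p_0<\infty$, so the interpolation of $\|\nabla u\|_{L^{r_2}}$ between $\|u\|_{L^\infty}$ and $\|u\|_{\dot{H}^{s,m}}$ uses an $L^\infty$-endpoint version that must be justified separately, and the bookkeeping ensuring that the powers of $\|u\|_{\dot{H}^{s,m}}$ and $\|u\|_{L^\infty}$ come out to exactly $1$ and $p-1$ is asserted rather than carried out. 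None of this affects the paper's actual use of the corollary (there $s<1$ or $p>2$), but as a proof of the statement as written the third regime remains open.
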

We refer to \cite{DAEL17} for the nex result:
\begin{lemma}\label{lem:Sobolev}
Let $0<2s_1<n<2s_2$. Then for any function $f\in \dot{H}^{s_1}\cap \dot{H}^{s_2}$ one has
%
\[\| f\|_{\infty}\lesssim \| f\|_{\dot{H}^{s_1}} + \| f\|_{\dot{H}^{s_2}}.\]
%
\end{lemma}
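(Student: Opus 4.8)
The plan is to reduce the estimate to an $L^1$ bound on the Fourier transform and then split the frequency space into low and high frequencies, controlling each piece by one of the two homogeneous norms via Cauchy--Schwarz. First I would observe that, for $f\in \dot H^{s_1}\cap \dot H^{s_2}$ with $0<2s_1<n<2s_2$, the Fourier transform $\hat f$ is a genuine locally integrable function: the weight $|\xi|^{-2s_1}$ is integrable near the origin and $|\xi|^{-2s_2}$ is integrable near infinity, which (as the two displays below make explicit) forces $\hat f\in L^1(\R^n)$. Consequently $f$ agrees a.e.\ with a continuous function given by Fourier inversion, and $\|f\|_{L^\infty}\lesssim \|\hat f\|_{L^1}$ by Hausdorff--Young.

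Next I would split $\|\hat f\|_{L^1}=\int_{|\xi|\le 1}|\hat f(\xi)|\,d\xi+\int_{|\xi|>1}|\hat f(\xi)|\,d\xi$. On the low-frequency region I insert the weight $|\xi|^{s_1}$ and apply Cauchy--Schwarz:
\[
\int_{|\xi|\le 1}|\hat f(\xi)|\,d\xi\le\Big(\int_{|\xi|\le 1}|\xi|^{-2s_1}\,d\xi\Big)^{1/2}\Big(\int_{\R^n}|\xi|^{2s_1}|\hat f(\xi)|^2\,d\xi\Big)^{1/2}\lesssim\|f\|_{\dot H^{s_1}},
\]
where the first factor is finite precisely because $2s_1<n$ and the second equals a constant times $\|f\|_{\dot H^{s_1}}$ by Plancherel. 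On the high-frequency region I argue symmetrically with the weight $|\xi|^{s_2}$:
\[
\int_{|\xi|>1}|\hat f(\xi)|\,d\xi\le\Big(\int_{|\xi|>1}|\xi|^{-2s_2}\,d\xi\Big)^{1/2}\Big(\int_{\R^n}|\xi|^{2s_2}|\hat f(\xi)|^2\,d\xi\Big)^{1/2}\lesssim\|f\|_{\dot H^{s_2}},
\]
the first factor now being finite because $2s_2>n$. Adding the two pieces yields $\|f\|_{L^\infty}\lesssim\|f\|_{\dot H^{s_1}}+\|f\|_{\dot H^{s_2}}$.

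The two Cauchy--Schwarz steps are routine, and the hypotheses $2s_1<n$ and $n<2s_2$ enter exactly as the convergence conditions for the two weight integrals. The only point requiring a little care is the preliminary one: since elements of a homogeneous Sobolev space $\dot H^{s}$ are a priori only tempered distributions modulo polynomials, one should check that the chosen representative of $f$ has an honest $L^1_{loc}$ Fourier transform to which the above manipulations and the inversion formula apply. This is guaranteed precisely by the simultaneous membership in $\dot H^{s_1}$ and $\dot H^{s_2}$ with the stated indices, and it is the step I expect to be the main (mild) obstacle; everything else is a direct computation.
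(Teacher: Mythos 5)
Your argument is correct and complete: the low/high frequency splitting with Cauchy--Schwarz against the weights $|\xi|^{\pm s_1}$, $|\xi|^{\pm s_2}$ is exactly the standard proof of this embedding, and the hypotheses $2s_1<n<2s_2$ enter precisely where you say they do. The paper itself gives no proof of this lemma (it is stated in the appendix with a reference to the cited literature), and the argument in that reference is the same frequency-decomposition computation you carried out, including the preliminary observation that $\hat f\in L^1$ so that $f$ has a bounded continuous representative.
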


The next result combine in some sense some familiar results as
Leibniz rule for the product of two function and H\"{o}lder's
inequality for derivatives of fractional order (Theorem 7.6.1 in \cite{Grafakos}):
\begin{proposition} \label{fractionalLeibniz}
Let us assume $s>0$ and $1\leq r \leq \infty, 1< p_1,p_2,q_1,q_2 \leq
\infty$ satisfying the relation \[
\frac{1}{r}=\frac{1}{p_1}+\frac{1}{p_2}=\frac{1}{q_1}+\frac{1}{q_2}.\]
Then the following fractional Leibniz rules hold:
\[
\|\,|D|^s(u \,v)\|_{L^r}\lesssim \|\,|D|^s
u\|_{L^{p_1}}\|v\|_{L^{p_2}}+\|u\|_{L^{q_1}}\|\,|D|^s v\|_{L^{q_2}}
\]
 for any $u\in \dot{H}^{s, p_1}(\mathbf{R}^n)\cap L^{q_1}(\mathbf{R}^n)$ and $v\in \dot{H}^{s, q_2}(\mathbf{R}^n)\cap L^{p_2}(\mathbf{R}^n)$,
\[
\|\langle D \rangle^s(u \,v)\|_{L^r}\lesssim \|\langle D \rangle^s
u\|_{L^{p_1}}\|v\|_{L^{p_2}}+\|u\|_{L^{q_1}}\|\langle D\rangle^s
v\|_{L^{q_2}}
\]
for any $u\in H^{s, p_1}(\mathbf{R}^n)\cap L^{q_1}(\mathbf{R}^n)$ and
$v\in H^{s, q_2}(\mathbf{R}^n)\cap L^{p_2}(\mathbf{R}^n)$.
\end{proposition}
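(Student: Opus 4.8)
The plan is to prove both inequalities via the standard Littlewood--Paley/paraproduct (Kato--Ponce) argument. First I would fix a homogeneous Littlewood--Paley resolution $\{\Delta_j\}_{j\in\mathbf{Z}}$, where each $\Delta_j$ is a Fourier multiplier with symbol supported in $|\xi|\sim 2^j$, together with the partial sums $S_j=\sum_{k\le j}\Delta_k$, and decompose the product through Bony's paraproduct
\[
u\,v=\Pi_1(u,v)+\Pi_2(u,v)+\Pi_3(u,v),
\]
where $\Pi_1(u,v)=\sum_j (S_{j-2}v)\,\Delta_j u$ (low frequencies of $v$ against high frequencies of $u$), $\Pi_2(u,v)=\sum_j (S_{j-2}u)\,\Delta_j v$ (the symmetric term), and $\Pi_3(u,v)=\sum_{|j-k|\le 1}\Delta_j u\,\Delta_k v$ (the resonant, high--high part).

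For the low--high paraproduct $\Pi_1$ each summand $(S_{j-2}v)\,\Delta_j u$ is frequency localized at scale $2^j$, so after applying $|D|^s$ the summands are almost orthogonal; I would bound $\|\,|D|^s\Pi_1(u,v)\|_{L^r}$ by a square function, pull the factor $2^{js}$ out of $|D|^s$ (modulo harmless Hardy--Littlewood maximal functions), and arrive at an estimate of the form
\[
\|\,|D|^s\Pi_1(u,v)\|_{L^r}\lesssim \Big\| \big(\sum_j 2^{2js}|\Delta_j u|^2\big)^{1/2}\,\big(\sup_j|S_{j-2}v|\big) \Big\|_{L^r}.
\]
Then I would apply H\"older with $\frac1r=\frac1{p_1}+\frac1{p_2}$, the square-function characterization $\big\|(\sum_j 2^{2js}|\Delta_j u|^2)^{1/2}\big\|_{L^{p_1}}\sim \|\,|D|^su\|_{L^{p_1}}$, and $\sup_j|S_{j-2}v|\lesssim Mv$ with $\|Mv\|_{L^{p_2}}\lesssim\|v\|_{L^{p_2}}$, obtaining the first term on the right-hand side; the term $\Pi_2$ gives, symmetrically, the second. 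For the resonant part $\Pi_3$ I would localize the output frequency, note that the dyadic block of scale $2^m$ receives contributions only from indices $j\gtrsim m$, move $|D|^s$ onto a single factor at the cost of $2^{js}$, and then use strictly $s>0$ to carry out the summation $\sum_{m\le j+C}2^{ms}\lesssim 2^{js}$ (via a weighted Cauchy--Schwarz in the dyadic index); the resulting bound is controlled by either of the two terms on the right. Summing the three contributions yields the homogeneous inequality, and the inhomogeneous version follows by the same argument with $\langle\xi\rangle^s$-multipliers, the low-frequency block being trivially bounded on $L^r$ (or by comparing $\langle D\rangle^s$ with $\mathrm{Id}+|D|^s$ modulo $L^r$-bounded operators).

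I expect the resonant term $\Pi_3$ to be the main obstacle: unlike the two paraproducts it is not frequency localized, so there is no automatic gain of $2^{js}$, and one must redistribute the derivative with care and exploit $s>0$ in an essential way to close the summation. This is also precisely where the hypotheses $1<p_1,p_2,q_1,q_2<\infty$ enter, since the square-function characterization of $\dot H^{s,p}$, the Hardy--Littlewood maximal inequality, and the Fefferman--Stein vector-valued maximal inequality all require them; the borderline cases permitted by the statement rely on the sharper estimates of Grafakos--Oh type. For the complete argument I would follow Theorem~7.6.1 in \cite{Grafakos}.
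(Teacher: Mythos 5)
Your outline of the Bony paraproduct/Kato--Ponce argument is correct and is exactly the approach of Theorem~7.6.1 in \cite{Grafakos}, which is the source the paper itself cites for this proposition without reproducing a proof. Since the paper offers no independent argument, your sketch matches the intended proof.
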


\section*{Acknowledgments}

The first  author is partially supported by Fapesp grant number 2020/08276-9 and CNPq grant number 304408/2020-4.
The second author is partially supported by “FCT – Fundação para a Ciência e a Tecnologia, I.P., Project UIDB/05037/2020”.

\end{document}